\newcommandx{\unsure}[2][1=]{\todo[linecolor=red,backgroundcolor=red!25,bordercolor=red,#1]{#2}}
\newcommandx{\change}[2][1=]{\todo[linecolor=blue,backgroundcolor=blue!25,bordercolor=blue,#1]{#2}}
\newcommandx{\info}[2][1=]{\todo[linecolor=OliveGreen,backgroundcolor=OliveGreen!25,bordercolor=OliveGreen,#1]{#2}}
\newcommandx{\improvement}[2][1=]{\todo[linecolor=Plum,backgroundcolor=Plum!25,bordercolor=Plum,#1]{#2}}
\newcommandx{\thiswillnotshow}[2][1=]{\todo[disable,#1]{#2}}
\newtheorem{theorem}{Theorem}[section]
\newtheorem{lemma}[theorem]{Lemma}
\newtheorem{proposition}[theorem]{Proposition}
\newtheorem{corollary}[theorem]{Corollary}
\newtheorem{question}[theorem]{Question}
\theoremstyle{definition}
\newtheorem{definition}[theorem]{Definition}
\newtheorem{remark}[theorem]{Remark}
\newtheorem{example}[theorem]{Example}
\newtheorem{notation}[theorem]{Notation}
\newcommand{\fm}{\mathfrak m}
\newcommand{\st}{\star}
\newcommand{\bx}{\mathbf x}
\newcommand{\bz}{\mathbf z}
\newcommand{\by}{\mathbf y}
\newcommand{\bu}{\mathbf u}
\newcommand{\bv}{\mathbf v}
\newcommand{\PP} {{\mathbb{P}}}
\newcommand{\CC} {{\mathbb{C}}}
\begin{document}
\sloppy

\title[Hadamard products of symbolic powers and Hadamard fat grids]{Hadamard products of symbolic powers and Hadamard fat grids}
\thanks{}

\author{I. Bahmani Jafarloo}
\address{Dipartimento di Matematica e Informatica, Università degli studi di Catania, Viale A. Doria, 6, 95100  Catania, Italy}
\email{iman.bahmanijafarloo@polito.it}

\author{C. Bocci}
\address{Department of Information Engineering and Mathematics,
Università degli studi di Siena, Via Roma 56, 53100 Siena, Italy}
\email{cristiano.bocci@unisi.it}
\author{E. Guardo}
\address{Dipartimento di Matematica e Informatica, Università degli studi di Catania, Viale A. Doria, 6, 95100  Catania, Italy}
\email{guardo@dmi.unict.it}
\author{G. Malara}
\address{Pedagogical University of Cracow, Institute of
	Mathematics, Podchor\c{a}\.{z}ych 2, PL-30-084 Kraków, Poland}
\email{grzegorzmalara@gmail.com}

\keywords{Hadamard products, fat grids, Waldschmidt  constant, resurgence}
\subjclass[2021]{13F20, 13D02, 13C40, 14N20, 14M99}

\begin{abstract}
In this paper we address the question if, for points $P, Q \in \mathbb{P}^{2}$, $I(P)^{m} \star I(Q)^{n}=I(P \star Q)^{m+n-1}$ and we obtain different results according to the number of zero coordinates in $P$ and $Q$.
 Successively, we use our results to define the so called Hadamard fat grids, which are the result of the Hadamard product of two sets of collinear points with given multiplicities. The most important invariants of Hadamard fat grids,  as minimal resolution, Waldschmidt constant and resurgence, are then computed.
\end{abstract}

\maketitle

\section{Introduction}

In the last few years, the Hadamard products of projective varieties have been widely studied from the point of view of Projective Geometry and Tropical Geometry. The main problem in this setting is the behaviour of the Hadamard product between varieties with many points with zero coordinates.

The paper \cite{BCK}, where Hadamard products of general linear spaces are studied,  can be considered the first step in this direction. Successively, the second author, with Calussi, Fatabbi and Lorenzini, in \cite{BCFL1} and \cite{BCFL2}, address the Hadamard products of linear varieties not necessarily in general position.  Moreover, they show that the  Hadamard
product of two generic linear varieties $V$ and $W$ is projectively equivalent to a Segre embedding and  that the Hilbert function of the Hadamard product $V\star W$ of two  varieties, with $\dim(V), \dim(W)\leq 1$, is the product of the Hilbert functions of the original varieties $V$ and $W$.  

An important result contained in \cite{BCK} concerns the construction of star configurations of points, via Hadamard product. This construction found a generalization in \cite{CCGVT} where the authors introduce a new construction using the Hadamard product to present star configurations of codimension $c$ of $\PP^n$ and which they called Hadamard star configurations. Successively,  the first author and Calussi introduce a more general type of Hadamard star configuration; any star configuration constructed by their approach is called a weak Hadamard star configuration. In \cite{BJC} they classify weak Hadamard star configurations and, in the case $c=n$, they investigate the existence of a (weak) Hadamard star configuration which is apolar to the generic homogeneous polynomial of degree $d$. The use of Hadamard products in this context permits a complete control both in the coordinates of the points forming the star configuration and  the equations of the hyperplanes involved on it. This fact leads to the question if other interesting set of points can be constructed by Hadamard product. Recent results  in this direction can be found in \cite{BCC22}, where the second author along with  C. Capresi and D. Carrucoli  built Gorenstein set of points in $\PP^3$, with given $h-$vector, by creating, via Hadamard products, a stick figure of lines to which they apply the results of Migliore and Nagel \cite{MN}, and also in \cite{CCFG} where the third author along with E. Carlini, M.V. Catalisano and G. Favacchio  found a relation between star configurations where all the hyperplanes are osculating to the same rational normal curve (contact star configurations)  and Hadamard products of linear varieties.

This paper addresses another question, proposed by the second author during the CMO Workshop ``Ordinary and Symbolic Powers of Ideals'' (May 14-19, 2017, Oxaqua, Mexico) stating

\begin{question}\label{Q1}
	Is it true that for $P, Q$ points in $\mathbb{P}^{2}$, $I(P)^{m} \star I(Q)^{n}=I(P \star Q)^{m+n-1}$ ?
\end{question}

The answer of this question is given in Theorem \ref{thmtwopoints}. Successively this result is used in Section 5 to define the so called Hadamard fat grids, which are the result of the Hadamard product of two sets of collinear points with given multiplicities. Finally, we compute algebraic invariants of Hadamard fat grids, as minimal degree, minimal resolution, Waldschmidt  constant and resurgence.
We also point out that the results of Section \ref{HFG} enlarge the known literature on the minimal graded resolution of sets of fat points in $\mathbb P^2$ with all the same multiplicities  supported on a complete intersection (see for instance \cite{CooperGuardo, GVT1}). 

\vskip0.3cm
{\bf Acknowledgments.} I. Bahami Jafarloo thanks C. Bocci for hospitality and for providing scientific discussion during his staying in Siena. I. Bahami Jafarloo's work has been supported by Universit\`{a} degli Studi di Catania, grant  \lq\lq Hadamard product and symbolic powers”, D.R. n. 3158 , 17/09/2021,\lq\lq Progetto Piaceri, Linea di intervento 2" ,  E. Guardo's work has  been supported by the Universit\`{a} degli Studi di Catania, ``Progetto Piaceri, Linea di intervento 2". C. Bocci and E. Guardo are members of GNSAGA of INdAM (Italy).

The authors thank G. Favacchio for some useful discussions on a preliminary version of the paper.

\section{Preliminary results on join and Hadamard products of ideals}
Let  $S = \mathbb{K}[\mathbf{x}]=\mathbb{K}\left[x_{0}, \ldots, x_{N}\right]$  be a polynomial ring over an algebraically closed field. Let $\fm=\left\langle x_{0}, \ldots, x_{N}\right\rangle$ be the homogeneous irrelevant ideal. 

Let $I_1, I_2, \dots I_r$ be ideals in $S$. We introduce $(N+1)r$ new unknowns, grouped in $r$ vectors $\mathbf{y}_j=(y_{j0},\dots, y_{jN})$, $j=1,2,\dots, r$ and we consider the polynomial ring $\mathbb{K}[\mathbf{x},\mathbf{y}]$ in all $(N+1)r+N+1$ variables.

Let $I_j(\mathbf{y}_j)$ be the image of the ideal $I_j$ in $\mathbb{K}[\mathbf{x},\mathbf{y}]$ under the map $\mathbf{x} \mapsto \mathbf{y}_j$. Then the {\it join}  $I_1*I_2* \cdots *I_r$, as defined in \cite{SS06}, is the elimination ideal
\[
\left(I_1(\mathbf{y_1})+\cdots + I_r(\mathbf{y_r})+\left\langle x_{i}-y_{1i}-y_{2i}-\cdots-y_{ri} \mid i=0, \ldots, N\right\rangle\right) \bigcap \mathbb{K}[\mathbf{x}]
\]
and their Hadamard product $I_1\star I_2\star\cdots \star I_r$, as defined in \cite{BCK}, is the elimination ideal
\[
\left(I_1(\mathbf{y_1})+\cdots + I_r(\mathbf{y_r})+\left\langle x_{i}-y_{1i}y_{2i}\cdots y_{ri} \mid i=0, \ldots, N\right\rangle\right) \bigcap \mathbb{K}[\mathbf{x}].
\]

We define the $r$-th secant of an ideal $I \subset \mathbb{K}[\mathbf{x}]$ to be the $r$-fold join $I$ with itself:
$$
I^{* r}:=I * I * \cdots * I.
$$
Similarly we define the $r$-th Hadamard power of an ideal $I \subset \mathbb{K}[\mathbf{x}]$ to be the $r$-fold Hadamard product of $I$ with itself:
$$
I^{\star r}:=I \st I \st \cdots \st I.
$$

\begin{remark}
The Hadamard product of points in a projective space is a coordinate-wise product as in the case of the Hadamard product of matrices.
Let $p,q\in \PP^N$ be two points with coordinates $[p_0:p_1:\cdots:p_N]$ and $[q_0:q_1:\cdots:q_N]$ respectively. If $p_iq_i\not= 0$ for some $i$, the Hadamard product $p\star q$ of $p$ and $q$, is defined as $p\star q=[p_0q_0:p_1q_1:\cdots:p_Nq_N]$. If  $p_iq_i= 0$ for all $i=0,\dots, n$ then we say $p\star q$ is not defined.
This definition extends to the Hadamard product of varieties in the following way.
Let $X$ and $Y$ be two varieties in $\PP^N$. Then the Hadamard product $X\star Y$ is defined as
\[X\star Y=\overline{\{p\star q: p\in X, q\in Y, p\star q\mbox{ is defined} \}}.\]

Thus the defining ideal of the Hadamard product $X\star Y$ of two varieties $X$ and $Y$, that is, the ideal $I(X\star Y)$, equals  the Hadamard product  of the ideals $I(X)\star I(Y)$  (see \cite[Remark 2.6]{BCK}).

\end{remark}

\begin{definition}
If $I$ is a homogeneous ideal of $S$, the $m$-th symbolic power of $I$ is defined as $$I^{(m)}=\bigcap_{p\in \textrm{Ass}(I)}{(I^mS_{p}\cap S)}$$ \noindent where $Ass(I)$ denotes the set of the associated primes of $I$. If $I$ is a radical ideal then $I^{(m)}=\bigcap_{p\in \textrm{Ass}(I)}{p^m}.$

In this paper we will always deal with ideals of  fat points.  Given distinct points $p_1,\dots, p_s\in \mathbb P^N$ and nonnegative integers $m_i$ (not all 0), let $Z = m_1p_1 +\dots+m_sp_s$ denote the scheme (called a set of fat points) defined by the ideal $I_Z=\bigcap_{i=1}^s{I(P_i)^{m_i}}$, where  $I(P_i)$ is the ideal generated by all homogeneous polynomials  vanishing at $P_i$. When  all the $m_i$ are equal, we say that  $Z$ is a homogeneous set of fat points.
For ideals of this type, the $m$-th symbolic power can be simply defined as $I_Z^{(m)}=I_{mZ}=\bigcap_{i=1}^s{I(P_i)^{mm_i}}$. 
\end{definition}

If $I^m$ is the regular power of an ideal $I$, then there is clearly a containment $I^m\subseteq I^{(m)}$
and indeed, for $0\neq I\subsetneq R$, $I^r\subseteq I^{(m)}$ holds if and only if $r\ge m$
\cite[Lemma 8.1.4]{PSC}.
A much more difficult problem is to determine when there are containments
of the form $I^{(m)}\subseteq I^r$.
The results of \cite{ELS} and \cite{HH1} show that
$I^{(m)}\subseteq I^r$ holds whenever $m\ge Nr$, where $N$ is the codimension of the ideal.

As a stepping stone, the second author and B. Harbourne (\cite{BH}) introduce an asymptotic
quantity which we refer to as 
the {\it resurgence}, namely 
\[
\rho(I)=\hbox{sup}\{m/r : I^{(m)}\not\subseteq I^r\}.
\]
In particular, if $m > \rho(I)r$, then one is guaranteed
that $I^{(m)}\subseteq I^r$. 
There are still, however, very few cases for which the actual value of
$\rho(I)$ is known, and they are almost all cases for which
$\rho(I)=1$. For example, by Macaulay's unmixedness theorem
it follows that $\rho(I)=1$ when $I$ is a complete intersection (see Proposition \ref{CI}). And if $I$ is a monomial ideal, 
it is sometimes possible to compute $\rho(I)$ directly. In this paper we will show that if $I$ is the defining ideal of a Hadamard fat grid, $\rho(I)=1$ even if $I$ is a not complete intersection (see Proposition \ref{equality symb and reg} and Corollary \ref{brian}).

We mention few results which will be useful for the rest of the paper.

\begin{proposition}\label{SS}
 Let $I$ be a radical ideal in a polynomial ring over an algebraically closed field. Then
$I^{(t)}=I * \fm^{t}$.
\end{proposition}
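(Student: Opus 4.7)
My plan is to unpack the Sturmfels--Sullivant definition of the join operation and then reduce the statement to the Zariski--Nagata differential characterization of symbolic powers. By the definition recalled in the preliminary section,
\[
I * \fm^t = \bigl(I(\by) + \fm^t(\bz) + \langle x_i - y_i - z_i \mid i = 0, \ldots, N \rangle\bigr) \cap \K[\bx].
\]
Using the linear relations $x_i = y_i + z_i$ to eliminate $\bx$ (equivalently, substituting $\bz = \bx - \by$ and then renaming), one checks that a polynomial $f(\bx) \in \K[\bx]$ belongs to $I * \fm^t$ if and only if
\[
f(\by + \bz) \in I(\by) + (z_0, \ldots, z_N)^t \quad \text{in } \K[\by, \bz].
\]

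The next step I would carry out is to apply the Taylor expansion of $f$ around $\by$:
\[
f(\by + \bz) = \sum_{\alpha} \frac{1}{\alpha!}\,(\partial^\alpha f)(\by)\,\bz^\alpha.
\]
Every term with $|\alpha| \geq t$ already lies in $\fm^t(\bz)$, so the membership criterion above becomes: $(\partial^\alpha f)(\by) \in I(\by)$ for every multi-index $\alpha$ with $|\alpha| < t$, i.e.\ $\partial^\alpha f \in I$ for all $|\alpha| < t$. (In positive characteristic one replaces the ordinary partials by Hasse--Schmidt derivatives; the formal argument is unchanged, which is why the algebraic-closedness of $\K$ is the only hypothesis on the base field.)

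Finally I would invoke the Zariski--Nagata theorem: for a radical ideal $I$ in the polynomial ring $S$ over an algebraically closed field,
\[
I^{(t)} = \{\, f \in S : \partial^\alpha f \in I \text{ for all } |\alpha| < t \,\}.
\]
Combining this characterization with the reformulation obtained in the Taylor-expansion step gives $I * \fm^t = I^{(t)}$, as required.

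The main obstacle is the last step. The Taylor-expansion manipulation and the elimination-ideal unpacking are purely formal, whereas the Zariski--Nagata identification requires genuine input: one either argues locally at a smooth point of $V(I)$ via the Jacobian criterion and reduces to the case of a maximal ideal, or invokes the theorem directly and accepts the Hasse--Schmidt modification needed in positive characteristic. Beyond that, the proof is essentially bookkeeping between the two descriptions.
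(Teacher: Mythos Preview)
Your argument is correct and is, in fact, essentially the proof given in Sullivant's paper \cite{S08}, to which the present paper simply defers: the proof of Proposition~\ref{SS} here consists of the single line ``See \cite[Proposition~2.8]{S08}.'' So there is nothing to compare at the level of method; you have supplied what the paper chose to cite.

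One small comment on presentation: the step where you pass from the elimination description to the criterion ``$f(\by+\bz)\in I(\by)+(z_0,\ldots,z_N)^t$'' deserves a word of justification, since one must check that nothing is lost in eliminating the $\bx$ variables (the linear generators $x_i-y_i-z_i$ form a regular sequence, so the elimination is harmless). Also, your remark about Hasse derivatives in positive characteristic is well placed; this is exactly the modification Sullivant makes to extend the statement beyond characteristic zero.
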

\begin{proof}
See \cite[Proposition 2.8]{S08}.
\end{proof}
\begin{proposition}\label{CI}
Let $I$ be a complete intersection ideal in a polynomial ring over an algebraically closed field. Then
	$I^{(t)}=I^t$ for all $t\geq 1$. 
\end{proposition}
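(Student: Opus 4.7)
The plan is to deduce the equality $I^{(t)} = I^t$ from the fact that every power of a complete intersection is unmixed, so that the primary decomposition of $I^t$ involves only the minimal primes of $I$ and no embedded components need to be stripped off.

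The first step is to recall from Macaulay's unmixedness theorem that, because $I$ is a complete intersection, the quotient $S/I$ is Cohen--Macaulay, and in particular $I$ has no embedded primes; all primes in $\operatorname{Ass}(I)$ are minimal of the same height equal to the number of generators of $I$. This is the ``base case'' $t=1$: the intersection $\bigcap_{\mathfrak{p}\in \operatorname{Ass}(I)} (I S_{\mathfrak{p}} \cap S)$ collapses to $I$ itself.

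The second and key step is to promote this to arbitrary $t$ by showing $S/I^t$ is also Cohen--Macaulay, which forces $\operatorname{Ass}(I^t) = \operatorname{Ass}(I)$ with no embedded primes. The standard way to do this is via the associated graded ring: if $f_1,\dots,f_c$ is a regular sequence generating $I$, then $\operatorname{gr}_I(S) \cong (S/I)[T_1,\dots,T_c]$ is a polynomial ring over a Cohen--Macaulay ring, hence Cohen--Macaulay. Consequently each graded piece $I^{t}/I^{t+1}$ is a free $S/I$-module, from which it follows inductively (using the short exact sequences $0 \to I^t/I^{t+1} \to S/I^{t+1} \to S/I^t \to 0$) that $S/I^t$ is Cohen--Macaulay for every $t \geq 1$. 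This is the main obstacle: it is a classical but nontrivial statement, and in the writeup it would either be cited from a standard reference (Matsumura or Bruns--Herzog) or replaced by a direct invocation that regular sequences have coinciding ordinary and symbolic powers.

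The final step is purely formal. Since $\operatorname{Ass}(I^t) = \operatorname{Ass}(I)$ consists entirely of the minimal primes of $I$, the ideal $I^t$ is unmixed and therefore equals the intersection of its localizations at those primes:
\[
I^t = \bigcap_{\mathfrak{p}\in \operatorname{Ass}(I^t)} (I^t S_{\mathfrak{p}} \cap S) = \bigcap_{\mathfrak{p}\in \operatorname{Ass}(I)} (I^t S_{\mathfrak{p}} \cap S) = I^{(t)},
\]
which is the desired equality. Alternatively, one could bypass the Cohen--Macaulayness argument entirely by appealing to the classical theorem of Zariski--Samuel that ordinary and symbolic powers agree for ideals generated by a regular sequence, but unfolding that result reduces to the same underlying computation.
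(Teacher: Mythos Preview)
Your argument is correct. The paper itself does not give a proof at all: its entire ``proof'' is the single line ``See \cite[Lemma 5 and Theorem 2 of Appendix 6]{ZS75}.'' You have effectively unpacked what that citation contains, via the standard route of showing that $S/I^t$ is Cohen--Macaulay (using the polynomial structure of $\operatorname{gr}_I(S)$ for a regular sequence) and hence unmixed, so that nothing is removed in passing from $I^t$ to $I^{(t)}$. You even close by noting that one could simply invoke Zariski--Samuel directly, which is exactly what the paper does. So your proposal and the paper's approach agree in spirit; the difference is only that you supply the mathematical content behind the citation, which the paper omits.
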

\begin{proof}
See	\cite[Lemma 5 and Theorem 2 of Appendix 6]{ZS75}.
\end{proof}

\begin{lemma}\label{interideal}
The Hadamard product distributes over intersections:
$$
\left(\bigcap_{l \in \mathcal{L}} J_{l}\right) \st K=\bigcap_{l \in \mathcal{L}}\left(J_{l} \st K\right)
.$$
\end{lemma}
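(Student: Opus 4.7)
The plan is to recast the Hadamard product as the preimage under a ring homomorphism and then reduce the claim to a flatness argument. Set $R = \mathbb{K}[\mathbf{y}, \mathbf{z}]$ and let $\psi: \mathbb{K}[\mathbf{x}] \to R$ be the $\mathbb{K}$-algebra homomorphism sending $x_i \mapsto y_i z_i$. The quotient
\[
\mathbb{K}[\mathbf{x}, \mathbf{y}, \mathbf{z}]/\langle x_i - y_i z_i : i = 0, \dots, N \rangle
\]
is isomorphic to $R$ via $x_i \mapsto y_i z_i$, $y_i \mapsto y_i$, $z_i \mapsto z_i$, so the elimination in the definition of $\star$ becomes contraction along $\psi$. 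Explicitly, for any ideals $I, K \subseteq \mathbb{K}[\mathbf{x}]$, I would record the identity
\[
I \star K = \psi^{-1}\bigl( I(\mathbf{y}) R + K(\mathbf{z}) R \bigr).
\]

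Since $\psi^{-1}$ commutes with arbitrary intersections of ideals in $R$, intersecting the above identity over $l \in \mathcal{L}$ reduces the lemma to the internal equality
\[
\bigcap_{l \in \mathcal{L}} \bigl( J_l(\mathbf{y}) R + K(\mathbf{z}) R \bigr) = \Bigl( \bigcap_{l \in \mathcal{L}} J_l \Bigr)(\mathbf{y}) R + K(\mathbf{z}) R
\]
inside $R$. The inclusion $\supseteq$ is immediate from monotonicity of sums of ideals.

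For $\subseteq$, I would pass to the quotient by $K(\mathbf{z}) R$. Each intersectand on the left contains $K(\mathbf{z}) R$, so the surjection $R \twoheadrightarrow R/K(\mathbf{z}) R$ commutes with the intersection. The quotient ring identifies with $\mathbb{K}[\mathbf{y}] \otimes_{\mathbb{K}} \mathbb{K}[\mathbf{z}]/K$, which is free (hence flat) as a $\mathbb{K}[\mathbf{y}]$-module because $\mathbb{K}[\mathbf{z}]/K$ is a $\mathbb{K}$-vector space. For a free extension, formation of extended ideals commutes with arbitrary intersections; pulling back to $R$ and using that both sides contain $K(\mathbf{z}) R$ then yields the required equality.

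The main obstacle is precisely the $\subseteq$ direction above: an element of the intersection a priori admits a different decomposition $a_l + b_l$ with $a_l \in J_l(\mathbf{y}) R$ for each index $l$, and there is no elementary way to combine these into a single summand in $\bigl( \bigcap_l J_l \bigr)(\mathbf{y}) R$. Flatness (in fact, freeness) is precisely what supplies this uniformization, by way of the identity $\bigl(\bigcap_l J_l\bigr) \otimes_{\mathbb{K}} \mathbb{K}[\mathbf{z}]/K = \bigcap_l \bigl( J_l \otimes_{\mathbb{K}} \mathbb{K}[\mathbf{z}]/K \bigr)$.
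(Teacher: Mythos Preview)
Your argument is correct. Both you and the paper reduce the lemma to the same internal identity
\[
\bigcap_{l} \bigl(J_l(\mathbf{y})R + K(\mathbf{z})R\bigr) \;=\; \Bigl(\bigcap_{l} J_l\Bigr)(\mathbf{y})R + K(\mathbf{z})R,
\]
and both then contract along $x_i\mapsto y_iz_i$. The difference is in how that identity is handled. The paper's proof is a one-line chain of equivalences (``$f(\mathbf{y}_1\mathbf{y}_2)\in(\cap J_l)(\mathbf{y}_1)+K(\mathbf{y}_2)$ iff $f(\mathbf{y}_1\mathbf{y}_2)\in J_l(\mathbf{y}_1)+K(\mathbf{y}_2)$ for all $l$''), declared analogous to Sullivant's join lemma and left at that; the nontrivial backward implication is not argued. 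You instead recast the elimination as contraction along a ring map and then supply the missing step explicitly, via the freeness of $\mathbb{K}[\mathbf{y}]\otimes_{\mathbb{K}}\mathbb{K}[\mathbf{z}]/K$ over $\mathbb{K}[\mathbf{y}]$, which is exactly what makes extended ideals commute with \emph{arbitrary} intersections (flatness alone would only give finite intersections). Your route is more self-contained and pinpoints the algebraic reason the lemma holds; the paper's is shorter but leans on the cited reference for the substance.
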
 
\begin{proof}
The proof is analogous to \cite[Lemma 2.6]{S08}. A polynomial $f$ belongs to $\left(\cap J_{l}\right) \st K$ if and only if $f(\mathbf{y}_1\mathbf{y}_2) \in\left(\cap J_{l}\right)(\mathbf{y}_1)+K(\mathbf{y}_2)$ if and only if $f(\mathbf{y}_1\mathbf{y}_2) \in J_{l}(\mathbf{y}_1)+K(\mathbf{y}_2)$ for all $l \in \mathcal{L}$ if and only if $f \in \cap\left(J_{l} \st K\right)$.
\end{proof}

From the previous lemma we get the following

\begin{corollary}
\label{coldechad}
Let $I,J$ be two ideals in $K[x_0,\dots,x_N]$ with primary decomposition respectively
$
I=I_1\cap I_2\cap \cdots \cap I_s$  and
$
J=J_1\cap J_2\cap \cdots \cap J_t
$,
then
\begin{equation}\label{dechad}
I \star J=\bigcap_{
\begin{array}{c}
1\leq i \leq s\\
1\leq j \leq t
\end{array}} I_i\star J_s.
\end{equation}
\end{corollary}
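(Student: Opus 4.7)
The plan is to reduce Corollary \ref{coldechad} to two successive applications of Lemma \ref{interideal}. The lemma as stated distributes the Hadamard product over an intersection on the left factor, namely $\bigl(\bigcap_{l} J_l\bigr) \star K = \bigcap_l (J_l \star K)$. Since the Hadamard product of ideals is manifestly symmetric in its two arguments (the defining elimination setup treats $\mathbf{y}_1$ and $\mathbf{y}_2$ symmetrically), the same distributivity holds on the right, i.e.\ $K \star \bigl(\bigcap_l J_l\bigr) = \bigcap_l (K \star J_l)$.

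With both-sided distributivity available, the proof proceeds in two steps. First, I would apply Lemma \ref{interideal} to the decomposition of $I$, keeping $J$ intact:
$$I \star J \;=\; \Bigl(\bigcap_{i=1}^{s} I_i\Bigr) \star J \;=\; \bigcap_{i=1}^{s}\bigl(I_i \star J\bigr).$$
Then, for each fixed $i$, I would apply the lemma again to the decomposition of $J$:
$$I_i \star J \;=\; I_i \star \Bigl(\bigcap_{j=1}^{t} J_j\Bigr) \;=\; \bigcap_{j=1}^{t}\bigl(I_i \star J_j\bigr).$$
Substituting the second identity into the first and merging the two nested intersections into a single intersection over all pairs $(i,j)$ with $1 \le i \le s$ and $1 \le j \le t$ yields the desired formula \eqref{dechad}.

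There is essentially no obstacle here: the entire content of the corollary is the bilinear distributivity of $\star$ over finite intersections, and Lemma \ref{interideal} already supplies the one-sided version. The only minor point worth flagging in the write-up is the symmetry of $\star$ that justifies distributing on the right, but this is immediate from the symmetric role of the auxiliary variables $\mathbf{y}_1,\mathbf{y}_2$ in the elimination definition of the Hadamard product.
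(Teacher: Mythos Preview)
Your proposal is correct and matches the paper's approach: the paper simply states that the corollary follows from Lemma~\ref{interideal}, and your two-step application of that lemma (together with the evident symmetry of $\star$) is exactly how one unpacks that remark. Note, incidentally, that the $J_s$ in the displayed formula~\eqref{dechad} is a typo for $J_j$, which you have correctly interpreted.
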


\begin{remark}\rm
The right-hand term in (\ref{dechad}) is not in general a minimal primary decomposition of $I\st J$ since it can contains some redundant term. As a matter of fact consider the ideals in $K[x_0,x_1,x_2]$
\[
\begin{array}{l}
I=\langle 5x_0-x_1-x_2, 6x_1^2-13x_1x_2+6x_2^2\rangle\\
J=\langle 5x_0-4x_1-3x_2, 16x_1^2-26x_1x_2+9x_2^2\rangle
\end{array}
\]
with primary decompositions
\[
\begin{array}{l}
I=I_1\cap I_2=\langle 3x_1-2x_2, x_0-\frac15 x_1-\frac15 x_2 \rangle \cap \langle 2x_1-3x_2, x_0-\frac15x_1-\frac15x_2\rangle\\
J=J_1\cap J_2=\langle 2x_1-x_2, x_0-\frac45 x_1-\frac35 x_2\rangle\cap \langle 8x_1-9x_2,x_0-\frac45 x_1-\frac35 x_2\rangle.
\end{array}
\]
The primary decomposition of $I\st J$ is 
\[
\langle 16x_1-27x_2, 4x_0-3x_2\rangle \cap \langle 4x_1-3x_2, 2x_0-x_2\rangle \cap \langle 3x_1-x_2, 3x_0-x_2\rangle
\]
which does not have four ideals as expected. This is due to the fact that
$I_2\st J_1=I_1\st J_2$.

\end{remark}
\begin{lemma}\label{mink_join}
One has	$  \fm ^m * \fm ^n=\fm^{m+n-1}$.
\end{lemma}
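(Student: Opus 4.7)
The plan is to prove the two inclusions separately, working directly from the defining elimination formula for the join. Throughout, I will use that a polynomial $f(\mathbf{x})$ lies in $\mathfrak{m}^m * \mathfrak{m}^n$ if and only if, after substituting $x_i = y_{1i}+y_{2i}$, the resulting polynomial $f(\mathbf{y}_1+\mathbf{y}_2)$ lies in $\mathfrak{m}^m(\mathbf{y}_1)+\mathfrak{m}^n(\mathbf{y}_2)$ inside $\mathbb{K}[\mathbf{y}_1,\mathbf{y}_2]$.

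For the inclusion $\mathfrak{m}^{m+n-1}\subseteq \mathfrak{m}^m*\mathfrak{m}^n$, I will test generators. A typical monomial generator is $x_{i_1}x_{i_2}\cdots x_{i_{m+n-1}}$. Substituting $x_{i_k}=y_{1,i_k}+y_{2,i_k}$ and expanding, one obtains $2^{m+n-1}$ terms, each indexed by a subset $A\subseteq\{1,\dots,m+n-1\}$ recording which factor $y_{1,i_k}$ is picked (versus $y_{2,i_k}$). A simple pigeonhole argument then shows that either $|A|\ge m$ — so the term is in $\mathfrak{m}^m(\mathbf{y}_1)$ — or $|A|\le m-1$, in which case the remaining $m+n-1-|A|\ge n$ factors force the term into $\mathfrak{m}^n(\mathbf{y}_2)$. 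This is the easy direction.

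For the inclusion $\mathfrak{m}^m*\mathfrak{m}^n\subseteq\mathfrak{m}^{m+n-1}$, I will use bidegree decomposition. Since the join of homogeneous ideals is homogeneous, it suffices to show that every nonzero homogeneous $f$ of degree $d$ in $\mathfrak{m}^m*\mathfrak{m}^n$ satisfies $d\ge m+n-1$. Assuming $d\le m+n-2$ for contradiction, one can find indices $(i,j)$ with $i+j=d$, $i\le m-1$ and $j\le n-1$. The bidegree-$(i,j)$ component of $\mathfrak{m}^m(\mathbf{y}_1)+\mathfrak{m}^n(\mathbf{y}_2)$ is then zero, so the bidegree-$(i,j)$ part of $f(\mathbf{y}_1+\mathbf{y}_2)$ must vanish. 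Specializing $\mathbf{y}_2\mapsto \mathbf{y}_1$ in this component and applying the Vandermonde identity $\sum_{|\beta|=i,\,\beta\le\alpha}\binom{\alpha}{\beta}=\binom{|\alpha|}{i}$ to collect the resulting multinomial coefficients yields $\binom{d}{i}f(\mathbf{y}_1)=0$, which gives $f=0$ in characteristic zero (the setting of the paper).

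The main obstacle is the second inclusion, specifically the polarization step: one needs a clean way to see that the bidegree-$(i,d-i)$ component of $f(\mathbf{y}_1+\mathbf{y}_2)$ already determines $f$. My argument does this by specialization plus Vandermonde, but it is the only part that requires a genuine calculation and that depends on invertibility of $\binom{d}{i}$; the first inclusion and the reduction to homogeneous $f$ are both formal.
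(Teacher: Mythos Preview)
Your proof is correct. The paper takes a different, more structural route: it writes the irreducible monomial decomposition $\fm^m=\bigcap_{\mathbf u}\fm^{\mathbf u}$ (with $\fm^{\mathbf u}=\langle x_i^{u_i}:u_i>0\rangle$) and likewise for $\fm^n$, then appeals to \cite[Lemma~2.3]{SS06} and \cite[Lemma~2.6]{S08} --- distributivity of the join over intersections together with a formula for the join of the pieces $\fm^{\mathbf u}$ --- to reassemble the intersection as $\fm^{m+n-1}$. Your argument is more elementary and fully self-contained: the inclusion $\fm^{m+n-1}\subseteq\fm^m*\fm^n$ is a one-line pigeonhole, and for the reverse inclusion you exploit the $\NN^2$-bigrading on $\mathbb{K}[\mathbf y_1,\mathbf y_2]$ together with the Vandermonde identity to recover $f$ from a single bidegree component of $f(\mathbf y_1+\mathbf y_2)$. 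You correctly flag the characteristic-zero dependence, and indeed the statement itself fails otherwise (e.g.\ in $\mathbb{K}[x]$ with $\operatorname{char}\mathbb{K}=2$ one has $x^2\in\fm^2*\fm^2$ but $x^2\notin\fm^3$). The paper's approach buys ready generalization to other monomial ideals at the cost of external citations; yours is tailored to powers of $\fm$ but requires no outside input.
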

\begin{proof}
Let  $\bu=\left(u_{0}, \ldots, u_{N}\right)$ an integer vector and denote by
 $$
\fm^{\mathbf{u}}=\left\langle x_{i}^{u_{i}}: u_{i}> 0\right\rangle
.$$

	We decompose $  \fm ^m = \cap_{\bu} \fm^\bu$ and $  \fm^n = \cap_{\bv} \fm^\bv$. Then  the proof follows from \cite[Lemma 2.3] {SS06} and \cite[Lemma 2.6]{S08}.
\end{proof}

\begin{definition}
Let $H_i\subset\PP^n,i=0,\ldots,n$, be the hyperplane $x_i=0$ and set
$$\Delta_i=\bigcup_{0\leq j_1<\ldots<j_{n-i}\leq n}H_{j_1}\cap\ldots\cap H_{j_{n-i}}.$$
\end{definition}

In other words, $\Delta_i$ is the $i-$dimensional variety of points having {\em at most} $i+1$ non-zero coordinates. Thus $\Delta_0$ is the set of coordinates points and $\Delta_{n-1}$ is the union of the coordinate hyperplanes. Note that elements of $\Delta_i$ have {\em at least} $n-i$ zero coordinates. We have the following chain of inclusions:
\begin{equation}\label{inclusiondelta}
\Delta_0=\{[1:0:\cdots:0],\ldots,[0:\cdots:0:1]\}\subset\Delta_1\subset\ldots\subset\Delta_{n-1}\subset\Delta_{n}=\PP^n.\end{equation}

Let $R$ be the ring $\mathbb{K}[x_0, x_1, \dots, x_n]$. Given a vector of nonnegative integers $I=(i_0, \dots, i_n)$, we denote by $X^I$ the monomial $x_0^{i_0}x_1^{i_1}\cdots x_n^{i_n}$ and by $|I|=i_0+\cdots +i_n$. Similarly, if $P$ is a point of $\PP^n$ with coordinates $[p_0:p_1:\cdots :p_n]$, we denote by $P^I$ the monomial $X^I$ evaluated in $P$, that is $p_0^{i_0}p_1^{i_1}\cdots p_n^{i_n}$.
Moreover, if $P$ is a point of $\PP^n\setminus \Delta_{n-1}$ with coordinates $[p_0:p_1:\cdots :p_n]$, we denote by $\frac{1}{P}$ the point with coordinates $[\frac{1}{p_0}:\frac{1}{p_1}:\cdots :\frac{1}{p_n}]$.

\begin{definition}\label{HADT}
Let $f\in \CC[x_0, x_1, \dots, x_n]$ be a homogenous polynomial, of degree $d$, of the form $f=\sum_{|I|=d}a_IX^I$ and consider a point $P\in \PP^n\setminus \Delta_{n-1}$. The Hadamard transformation of $f$ by $P$ is the polynomial
\begin{equation}\label{Hadtransformation}
f^{\star P}=\sum_{|I|=d}\frac{a_I}{P^I}X^I.
\end{equation}
\end{definition}

\begin{theorem}\label{ThmGens}
Let $I$ be an ideal in $\CC[x_0, \dots, x_n]$ and consider a point $P\in \PP^n\setminus \Delta_{n-1}$. If $f_1, \dots, f_s\subset \CC[x_0, \dots, x_n]$ is a generating set for $I$, that is $I=\langle f_1, \dots, f_s\rangle$, then $f_1^{\star P}, \dots, f_s^{\star P}$ is a generating set for $I(P) \star I$.

Moreover, if $f_1, \dots, f_s$ is a Gr\"obner basis for $I$, then $f_1^{\star P}, \dots, f_s^{\star P}$ is a Gr\"obner basis for $I(P) \star I$.
\end{theorem}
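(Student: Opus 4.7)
The plan is to reduce both parts of the theorem to a single structural identity. Because $P=[p_0:\cdots:p_n]$ lies outside $\Delta_{n-1}$, every coordinate $p_i$ is nonzero, so the substitution $\phi_P\colon\CC[\mathbf{x}]\to\CC[\mathbf{x}]$ defined by $x_i\mapsto x_i/p_i$ is a graded $\CC$-algebra automorphism, and Definition~\ref{HADT} gives $\phi_P(f)=f^{\star P}$ for every homogeneous $f$. I will prove
$$I(P)\star I=\phi_P(I),$$
from which both assertions follow. The first follows because ring automorphisms send generating sets to generating sets. The Gr\"obner basis assertion follows because $\phi_P$ merely rescales the coefficient of each monomial $X^I$ by the nonzero scalar $1/P^I$, so it preserves the monomial support of every polynomial and therefore its leading monomial under any term order; in particular $\ini\phi_P(I)=\ini I=\langle\ini f_1,\ldots,\ini f_s\rangle=\langle\ini f_1^{\star P},\ldots,\ini f_s^{\star P}\rangle$.

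For the inclusion $\phi_P(I)\subseteq I(P)\star I$, I would work directly inside the defining elimination ideal
$$J=I(P)(\mathbf{y}_1)+I(\mathbf{y}_2)+\langle x_i-y_{1i}y_{2i}\mid i=0,\dots,n\rangle.$$
The generators $p_jy_{1i}-p_iy_{1j}$ of $I(P)(\mathbf{y}_1)$ force $y_{1i}\equiv(p_i/p_0)y_{10}\pmod J$, and, combined with $x_i\equiv y_{1i}y_{2i}\pmod J$, this yields, for any homogeneous $f=\sum_I a_IX^I\in I$ of degree $d$, the congruence
$$f^{\star P}(\mathbf{x})=\sum_I\frac{a_I}{P^I}X^I\equiv (y_{10}/p_0)^d\,f(\mathbf{y}_2)\pmod J.$$
Since $f(\mathbf{y}_2)\in I(\mathbf{y}_2)\subset J$, it follows that $f^{\star P}(\mathbf{x})\in J\cap\CC[\mathbf{x}]=I(P)\star I$.

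For the reverse inclusion I introduce the auxiliary ring homomorphism $\psi\colon\CC[\mathbf{x},\mathbf{y}_1,\mathbf{y}_2]\to\CC[\mathbf{x},t,t^{-1}]$ that fixes $\mathbf{x}$ and sends $y_{1i}\mapsto p_it$ and $y_{2i}\mapsto x_i/(p_it)$. A direct substitution check shows that $\psi$ annihilates each generator of $I(P)(\mathbf{y}_1)$ and each binomial $x_i-y_{1i}y_{2i}$, while $\psi(f(\mathbf{y}_2))=t^{-\deg f}\,f^{\star P}(\mathbf{x})$ for homogeneous $f\in I$. Because $t$ is a unit in the Laurent ring, the extended ideal generated by $\psi(J)$ in $\CC[\mathbf{x},t,t^{-1}]$ equals $\phi_P(I)\cdot\CC[\mathbf{x},t,t^{-1}]$. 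Any $g(\mathbf{x})\in J\cap\CC[\mathbf{x}]$ is fixed by $\psi$, so $g(\mathbf{x})$ lies in this extended ideal; specializing $t=1$ then expresses $g(\mathbf{x})$ as a $\CC[\mathbf{x}]$-linear combination of the $f_i^{\star P}$, giving $g\in\phi_P(I)$.

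The main obstacle is this reverse inclusion: the ideal of the projective point $P$ in the $\mathbf{y}_1$ coordinates leaves a one-parameter scaling freedom (the projective representative of $P$ is not unique in affine coordinates), and the auxiliary variable $t$ is introduced precisely to book-keep this ambiguity so that the contraction to $\CC[\mathbf{x}]$ can be performed cleanly via specialization $t=1$. Once the identity $I(P)\star I=\phi_P(I)$ is established, both conclusions of Theorem~\ref{ThmGens} follow at once from the observations about $\phi_P$ made in the first paragraph.
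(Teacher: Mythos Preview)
Your argument is correct. The paper itself does not supply a proof of this theorem: it simply cites \cite[Theorem~3.5]{BC}, so there is no in-paper argument to compare against. Your approach---reducing everything to the identity $I(P)\star I=\phi_P(I)$ for the diagonal automorphism $\phi_P(x_i)=x_i/p_i$---is the natural one, and both inclusions are handled cleanly. The forward inclusion via the congruences $y_{1i}\equiv (p_i/p_0)y_{10}$ is straightforward; the reverse inclusion using the specialization map $\psi$ to $\CC[\mathbf{x},t,t^{-1}]$ followed by $t\mapsto 1$ is an elegant way to deal with the projective scaling ambiguity you identify. One small remark: your claim that the extension of $\psi(J)$ \emph{equals} $\phi_P(I)\cdot\CC[\mathbf{x},t,t^{-1}]$ uses that the generators of $I$ are homogeneous (otherwise $\psi(f(\mathbf{y}_2))$ is not a unit times $f^{\star P}$). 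This is harmless here since the ambient setting is projective and Definition~\ref{HADT} is stated only for homogeneous $f$, but strictly speaking the theorem as written does not impose homogeneity on $I$; your specialization argument still goes through, however, since $\psi(f(\mathbf{y}_2))|_{t=1}=f^{\star P}(\mathbf{x})$ regardless.
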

\begin{proof}
See \cite[Theorem 3.5]{BC}.
\end{proof}

\section{Preliminary results on ACM sets of fat points in $\mathbb P^1\times \mathbb P^1$}\label{p1xp1}

In this section we recall some known results and a standard technique used for  sets of fat points in $\mathbb P^1\times \mathbb P^1$ since they are the main tools to find a minimal graded free resolution of  special sets of fat points in $\mathbb P^2$ called Hadamard fat grids (see Definition \ref{defHFG} in Section \ref{HFG}). Indeed,  we prove that a Hadamard fat grid inherits some properties from an arithmetically Cohen-Macaulay set of a fat points in $\mathbb P^1\times \mathbb P^1$, such as that its defining ideal is generated by product of linear forms. We should refer the reader to Chapters 4 and 6 in \cite{GVT} for more details on arithmetically Cohen-Macaulay sets of fat points in $\mathbb P^1\times \mathbb P^1$.

Let $Z$ be a finite set of points in  $\mathbb P^1\times\mathbb P^1$, and let $R/I_Z$
denote the associated $\mathbb N^2$-graded coordinate ring. When $R/I_Z$ is Cohen-Macaulay, that is depth $R/I_Z = dim R/I_Z= 2$, then $Z$ is called
an arithmetically Cohen-Macaulay (ACM for short) set of points.

\begin{remark}\label{FGM} For the ease of the reader, we now recall  a standard argument that relates sets of (fat) points in $\mathbb P^1\times\dots \times\mathbb P^1$  ($n$-times) and sets of (fat) points in $\mathbb P^{2n-1}$ in the case $n=2$ (see for more details Section 3,  Theorem 3.2, Corollaries 3.3  and 3.4 in \cite{FGM} and, for instance, also \cite{FM1, FM2, GHVT}).  We observe that in our case in $\mathbb{P}^1$ a point is also a hyperplane, and this allows us to use hyperplane sections and  related constructions for our study. 

Let $R = \mathbb K[x_{1,0},x_{1,1}, x_{2,0},x_{2,1}]=\mathbb K[\mathbb P^1\times \mathbb P^1]$ be the coordinate ring for $\mathbb P^1\times \mathbb P^1$, which we shall also view as the coordinate ring for $\mathbb P^{3}$. Let $Z \subset \mathbb P^1\times \mathbb P^1$ be a finite set of fat points. Since $I_Z$ defines both a set of fat points in $\mathbb P^1\times \mathbb P^1$ and a union of linear varieties (fat lines) in $\mathbb P^{3}$, by abuse of notation, we denote by $Z$ also the subvariety of $\mathbb P^{3}$ defined by this ideal. 

In  $\mathbb P^1\times \mathbb P^1$ a complete intersection  of type $(r,0)$ and $(0,s)$, or simply an $(r,s)$-grid, can be viewed as $2$ families of linear forms, and we denote by $A_{1,i}$ the linear combinations of $x_{1,0}$ and $x_{1,1}$, and by $A_{2,i}$ the linear combinations of $x_{2,0}$ and $x_{2,1}$. Set\[
\begin{array}{c}
|\{ A_{1,i} \ | \ A_{1,i} \cap X \neq \emptyset \}| =r\\
|\{ A_{2,i}  \ | \ A_{2,i} \cap X \neq \emptyset \}| = s.
\end{array}
\]

Let $T$ be the polynomial ring in $r+s$ variables $a_{1,1},\dots, a_{1,r}, a_{2,1},\dots, a_{2,s}$. We form the monomial ideal in $T$ given by the intersection of ideals of the form $(a_{1,i}, a_{2,j})^{m_{ij}}$ corresponding to the components of $X$. This intersection defines a height $2$ monomial ideal, $J\subset T$.  Following Theorem 2 in \cite{FGM},  a set of fat points $Z$ in $\mathbb P^1\times \mathbb P^1$  is ACM if and only if $J\subset T$ is CM, where $T$ is  the ring previously defined. Since $Z$ can be viewed as an ACM set of lines in $\mathbb P^3$ (it is $1$-dimensional) we can still find a suitable hyperplane section in order to get a set $X$ of points in $\mathbb P^2$ that shares the same Betti numbers as $Z$. In particular, with the above described method, we will show that a Hadamard fat grid in $\mathbb P^2$ share the same graded Betti numbers as  an ACM set of fat points in $\mathbb P^1\times \mathbb P^1$ (see Theorem \ref{samebetti}).

\end{remark}

In the sequel, it  useful to consider in $\mathbb Z\times \mathbb Z$  and in
$\mathbb N \times \dots \times \mathbb N$ the partial (lexicographic) ordering induced by the usual one in  $\mathbb Z$ and in $\mathbb N$, respectively; we will denote it by “$\leq$”.

Consider an $(r,s)$-grid, and let $M=\{m_1,\ldots,m_r\}$, $ N=\{n_1,\ldots,n_s\}$ be two sets of nonnegative integers with $m_1\leq m_2\leq \dots\leq m_r$ and $n_1\leq n_2\leq \dots\leq n_s$. We have

\begin{align*}\label{condhfg}
\max_{i,j}\{m_i+n_j-1\}&=m_r+n_s-1,\\
\max_{i}\{m_i+n_j-1\}&=m_r+n_j-1,\\
\max_{j}\{m_i+n_j-1\}&=m_i+n_s-1.
\end{align*}

For each $i=1,\dots,r$ and $j=1,\dots,s$, set
\begin{align*}
t_{ij}(h)=(m_{i}+n_j-1-h)_{+}=\max\{0,m_{i}+n_j-1-h\} \textrm{ for $h\in \mathbb N^0$.}
\end{align*}

For $i=1\dots, r$, let $\mathcal S$ be the set of the $s$-tuples of type 

\begin{equation}\label{S}
\mathcal S=\left\{(t_{i1}(h), t_{i2}(h),\ldots,t_{is}(h))\right\}.
\end{equation}

The next Lemma \ref{sto}  shows that the set  $\mathcal S$ of tuples associated to $M$ and $N$ is totally ordered. It will be useful to find the graded Betti numbers of a Hadamard fat grid (see Section \ref{HFG}).

We have
\begin{lemma}\label{sto} For $i=1\dots, r$ and for  $h=0\dots, m_1+n_s-2$, the set $\mathcal S$ is a totally ordered set of tuples with usual lex ordering.
\end{lemma}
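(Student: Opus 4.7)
The plan is to reduce the claim to the elementary observation that each coordinate $t_{ij}(h)$, viewed as a function of $h$, is weakly decreasing. Since the componentwise partial order on $\mathbb{N}^s$ refines the lex order, componentwise monotonicity in $h$ will immediately yield that any two tuples in $\mathcal{S}$ are comparable in the lex order, which is exactly what ``totally ordered'' means once lex is a total order on distinct tuples.

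First I would fix $i \in \{1, \ldots, r\}$ and compare two tuples $\mathbf{t}(h) := (t_{i1}(h), \ldots, t_{is}(h))$ and $\mathbf{t}(h')$ of $\mathcal{S}$ for $h \leq h'$ in $\{0, 1, \ldots, m_1 + n_s - 2\}$. For every $j \in \{1, \ldots, s\}$, the inequality $m_i + n_j - 1 - h \geq m_i + n_j - 1 - h'$ is preserved under $\max\{0, \cdot\}$, giving
\[
t_{ij}(h) = \max\{0, m_i + n_j - 1 - h\} \;\geq\; \max\{0, m_i + n_j - 1 - h'\} = t_{ij}(h').
\]
Hence $\mathbf{t}(h) \geq \mathbf{t}(h')$ componentwise. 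This step is the entire content of the lemma; it is routine but it must be spelled out because some of the entries are truncated at $0$ and one needs to know that the truncation preserves the monotonicity.

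Second, I would pass from componentwise to lex comparability: if $\mathbf{t}(h) \geq \mathbf{t}(h')$ componentwise, then either the two tuples coincide or, at the smallest index $j$ where they differ, $t_{ij}(h) > t_{ij}(h')$, so $\mathbf{t}(h) \geq_{\mathrm{lex}} \mathbf{t}(h')$. Applying this to every pair of elements of $\mathcal{S}$ makes the set totally ordered in lex, concluding the proof.

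I do not anticipate any real obstacle here; the statement is a bookkeeping lemma preparing the ground for the Betti number computations announced for Section \ref{HFG}, and the upper bound $m_1 + n_s - 2$ on $h$ plays no role in the monotonicity argument itself — it merely records the range over which the tuples are not identically zero.
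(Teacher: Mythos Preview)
Your monotonicity argument is correct as far as it goes, but it does not cover the whole set $\mathcal{S}$. The paper's phrasing is admittedly ambiguous, but its own proof (an induction involving $r$) and the downstream use in Proposition~\ref{acm} via \cite[Theorem~6.21]{GVT} make clear that $\mathcal{S}$ collects the tuples for \emph{all} pairs $(i,h)$, not just for a single fixed $i$. You fix $i$ at the outset and compare only $\mathbf{t}(h)$ with $\mathbf{t}(h')$ for that same $i$; you never compare a tuple coming from index $i$ with one coming from $i'\neq i$. The repair is immediate using your own idea: since $t_{ij}(h)=\max\{0,(m_i-h)+n_j-1\}$ depends on the pair $(i,h)$ only through the integer $m_i-h$, comparing two arbitrary tuples of $\mathcal{S}$ reduces to comparing the integers $m_i-h$ and $m_{i'}-h'$, and the same componentwise monotonicity---now in that single integer parameter rather than in $h$ alone---yields lex comparability.

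Once patched, your direct argument is cleaner than the paper's. The paper proceeds inductively: it treats $r=1$ by hand, then for $r>1$ lowers the largest multiplicity $m_k$ by one, applies the inductive hypothesis to the modified set $\mathcal{S}'$, and checks that the extra tuple in $\mathcal{S}\setminus\mathcal{S}'$ dominates all of $\mathcal{S}'$. Your approach exposes the structural reason---each tuple is a weakly increasing function of the single integer $m_i-h$---and dispenses with the induction entirely.
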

\begin{proof} Let $r=1$. From construction, it is $\max_{j}\{m_1+n_j-1\}=m_1+n_s-1$. For $h=0\dots, m_1+n_s-2$, we have
{\small{\begin{align*}
\mathcal S=&\{(t_{11}(h), t_{12}(h),\ldots,t_{1s}(h))\}=\\
&\{ ((m_1+n_1-1-h)_+,(m_1+n_2-1-h)_+,\dots, (m_1+n_s-1-h)_+,)\}=\\ 
&\{ (m_1+n_1-1, m_1+n_2-1,\dots, m_1+n_s-1), (m_1+n_1-2, m_1+n_2-2,\dots, m_1+n_s-2), \\
&\dots, 
(m_1+n_1-1-m_1-n_s+2,\dots, m_1+n_s-1- m_1-n_s+2)\}=\\
&\{ (m_1+n_1-1, m_1+n_2-1,\dots, m_1+n_s-1), (m_1+n_1-2, m_1+n_2-2,\dots, m_1+n_s-2),\\
& \dots, 
((n_1-n_s+1)_+, (n_2-n_s+1)_+\dots, (1))\}.
\end{align*}}}

Since $n_1\leq n_2\leq \dots\leq n_s$,  it is $m_1+n_{j_1}-1\leq m_1+n_{j_2}-1$ for all $j_1\neq j_2$, and since $n_1\leq n_s$ we have $n_1-n_s+1\leq 0$ if $n_1<n_s$ and $n_1-n_s+1=1$ if $n_1=n_s.$ Hence $\mathcal S$ is totally ordered for $r=1.$

Suppose $r>1$ and the lemma true for all the sets $\mathcal S' $ of $s$-tuples with $r'<r$ and prove it for $\mathcal S$. Let $1\leq k\leq r$ an integer such that $m_k+n_{j_1}-1\leq m_k+n_{j_2}-1$ for all $j_1\neq j_2$. From our construction, we have that $k=r$.

Define for $i=1\dots, r$
\begin{align*}
\mathcal S'=&\{(t'_{i1}(h), t'_{i2}(h),\ldots, t'_{is}(h))\}
\end{align*}
\noindent where 
\begin{align*}
t'_{ij}(h)=
\begin{cases}
(m_{i}+n_j-1-h)_{+} & \text{ if  $i\neq k$}\\
(m_{k}+n_j-2-h)_{+}& \text{ if $i=k$}.
\end{cases}
\end{align*}

By induction $\mathcal S'$ is totally ordered. We observe that 
$$\mathcal S=\mathcal S' \cup \{ ((m_k+n_1-1-h)_+,(m_k+n_2-1-h)_+,\dots, (m_k+n_s-1-h)_+)\}$$
\noindent and by construction and for all $i=1\dots,r=k$, we have

$$((m_i+n_1-1-h)_+,\dots, (m_i+n_s-1-h)_+)\leq ((m_k+n_1-1-h)_+,\dots, (m_k+n_s-1-h)_+).$$
Thus the set $\mathcal S$ is totally ordered.
\end{proof}

For each integer $0 \leq h \leq m_i+n_s-2$, with $i=1,\dots,r$  we define
\[
a_{i,h} := \sum_{e=1}^{s} (m_{i}+n_e-1 - h)_+ \hspace{.5cm}
\mbox{where $(n)_+ := \max\{n,0\}$.}
\]
Define
\begin{eqnarray*}
\tilde{\mathcal A}_Z & :=&
(a_{1,0},\ldots,a_{1,m_1+n_s-2},a_{2,0},\ldots,a_{2,m_2+n_s-2},\ldots,
a_{r,0},\ldots,a_{r,m_r+n_s-2}).
\end{eqnarray*}
Finally, we define $\mathcal A_Z$ to be the set of $(m_1+n_s-1+\dots+m_r+n_s-1)$-tuples,  that is, the set of  $(\sum_{i=1}^{r} m_i+rn_s-r)$-tuples that
one gets by rearranging the elements of $\tilde{\mathcal A}_Z$ in
descending order.

We now recall how to compute the graded Betti numbers of $I(Z)$ when
$Z$ is an ACM set of fat points in $\mathbb P^1\times \mathbb P^1$.   Let
$\mathcal A_Z=(\alpha_1,\ldots,\alpha_m)$ be the tuple associated to $Z$.
Define the following two sets from $\mathcal A_Z$:
\begin{eqnarray*}
C_{Z} & := & \left\{(m,0),(0,\alpha_1)\right\} \cup
\left\{(i-1,\alpha_i) ~|~ \alpha_i - \alpha_{i-1} < 0\right\} \\
V_{Z} & := & \left\{ (m,\alpha_m) \right\} \cup \left\{
(i-1,\alpha_{i-1}) ~|~ \alpha_i-\alpha_{i-1} < 0 \right\}
\end{eqnarray*}
\noindent where we take $\alpha_{-1} = 0$.  With this notation, we get that a minimal bigraded free resolution of an ACM set of points in $\mathbb P^1\times \mathbb P^1$ is given by 

\begin{theorem}\cite[Theorem 6.27]{GVT} \label{bettinumbersftpts}
Suppose that $Z$ is an ACM set of fat points in $\mathbb P^1\times \mathbb P^1$ with
$\mathcal A_Z = (\alpha_1,\ldots,\alpha_m)$.  Let $C_{Z}$ and $V_{Z}$ be
constructed from $\mathcal A_Z$ as above.  Then a bigraded minimal free
resolution of $I(Z)$  is given by
\[
0 \longrightarrow \bigoplus_{(v_1,v_2) \in V_{Z}} R(-v_1,-v_2)
\longrightarrow \bigoplus_{(c_1,c_2) \in C_{Z}} R(-c_1,-c_2)
\longrightarrow I(Z) \longrightarrow 0.\]
\end{theorem}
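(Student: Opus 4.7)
The plan is to exploit the ACM hypothesis together with the correspondence described in Remark~\ref{FGM}, which allows one to pass from the set of fat points $Z$ in $\mathbb{P}^1\times\mathbb{P}^1$ to a monomial height-two ideal $J$ in the polynomial ring $T$ in $r+s$ variables with identical graded Betti numbers. Since $J$ is a height-two Cohen--Macaulay monomial ideal, the Auslander--Buchsbaum formula guarantees that its minimal free resolution has length exactly one; consequently the minimal bigraded resolution of $I(Z)$ takes the form
\[
0 \longrightarrow F_1 \longrightarrow F_0 \longrightarrow I(Z) \longrightarrow 0,
\]
and the problem reduces to identifying the bidegrees that appear in $F_0$ and $F_1$.

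First I would analyze the staircase of $J$, using the description $J=\bigcap_{i,j}(a_{1,i},a_{2,j})^{m_{ij}}$ where $m_{ij}$ is the multiplicity of the point at the intersection of the $i$-th horizontal and $j$-th vertical ruling. The tuple $\mathcal{A}_Z=(\alpha_1,\ldots,\alpha_m)$ encodes, after reordering, the widths of the staircase at each fixed height, with the quantities $a_{i,h}=\sum_{e}(m_i+n_e-1-h)_+$ giving these widths explicitly. Lemma~\ref{sto} ensures that for fixed $i$ the rows arrange themselves in a totally ordered fashion, and sorting all rows in descending order produces a Young-diagram-like shape whose outer corners will govern the resolution.

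Next I would read off the generators and syzygies directly from this staircase. A minimal bihomogeneous generator of $I(Z)$ arises at each outer corner, that is, precisely at each index where $\mathcal{A}_Z$ strictly decreases; recording the column position $i-1$ and the row height $\alpha_i$ produces exactly the set $C_Z$, augmented by the two extremal elements $(m,0)$ and $(0,\alpha_1)$ corresponding to the pure powers on each side of the diagram. The first syzygies correspond bijectively to these inner corners with bidegree shifted by one step, yielding $V_Z$ together with the final shift $(m,\alpha_m)$. A Hilbert--Burch-style rank count then confirms $\mathrm{rank}\,F_0=|C_Z|=\mathrm{rank}\,F_1+1$, and a Hilbert function computation (comparing the alternating sum of the bidegrees in $C_Z$ and $V_Z$ with the known Hilbert function of an ACM set of fat points on a grid) verifies that the numerology is consistent.

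The main obstacle I anticipate is verifying minimality: showing that no element of $C_Z$ or $V_Z$ corresponds to a redundant generator or syzygy. This reduces to a careful combinatorial argument on the staircase, ensuring that at each strict descent of $\mathcal{A}_Z$ the corresponding bihomogeneous form does not already lie in the ideal generated by the previously recorded corners, and dually for the syzygies. The key input is that, after the sorting step, consecutive entries of $\mathcal{A}_Z$ either are equal (in which case no new corner appears) or strictly decrease (in which case exactly one new corner appears), which in turn forces the differential $F_1\to F_0$ to have all entries in the graded maximal ideal and hence guarantees that the presentation above is minimal.
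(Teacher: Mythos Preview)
The paper does not prove this statement at all: Theorem~\ref{bettinumbersftpts} is quoted verbatim from \cite[Theorem~6.27]{GVT} and is used as a black box throughout Section~\ref{HFG}. There is therefore no ``paper's own proof'' to compare your proposal against; the authors simply cite the result and apply it.

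As for your sketch itself, the overall strategy (pass to a height-two Cohen--Macaulay monomial ideal, invoke Auslander--Buchsbaum to get projective dimension one, and read the shifts off the staircase) is the standard route and is essentially how the result is established in \cite{GVT}. One caution, though: you lean on Lemma~\ref{sto} and on the explicit formula $a_{i,h}=\sum_e(m_i+n_e-1-h)_+$, both of which are stated in the paper only for the special multiplicity pattern $m_{ij}=m_i+n_j-1$ arising from Hadamard fat grids. The theorem you are asked to prove concerns an \emph{arbitrary} ACM set of fat points in $\mathbb{P}^1\times\mathbb{P}^1$, so invoking those paper-specific ingredients does not cover the general case. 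If you want a self-contained argument at that generality, you would need the general construction of $\mathcal{A}_Z$ and the ACM characterization from \cite{GVT} rather than the specialized versions recorded in Section~\ref{p1xp1}.
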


The following result will be one of the main tool that allows us to find that the Betti numbers of a given  Hadamard fat grid in $\mathbb P^2$ whose support  is a complete intersection. 

\begin{proposition}\label{acm}  Let $Z$ be a set of fat points in $\mathbb P^1\times \mathbb P^1$ whose support is a complete intersection of type $(r,s)$ (or $(r,s)$-grid))  and whose multiplicities are of type $m_i+n_j-1$ for $i=1,\dots,r$ and $j=1,\dots,s$. Then $Z$ is ACM.
\end{proposition}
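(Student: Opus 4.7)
My approach is to invoke the translation described in Remark~\ref{FGM}: $Z\subset \mathbb{P}^1\times\mathbb{P}^1$ is ACM if and only if the monomial ideal
$$J = \bigcap_{i=1}^{r}\bigcap_{j=1}^{s} (a_{1,i},a_{2,j})^{m_i+n_j-1}$$
in $T=\K[a_{1,1},\dots,a_{1,r},a_{2,1},\dots,a_{2,s}]$ is Cohen--Macaulay. Since every associated prime of $J$ is a height-$2$ monomial prime of the form $(a_{1,i},a_{2,j})$, the ideal $J$ has no embedded primes, so $J$ is automatically unmixed and $T/J$ is equidimensional of dimension $r+s-2$. What remains is a purely combinatorial check.

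To upgrade unmixedness to Cohen--Macaulayness, the plan is to invoke the characterization of ACM fat-point schemes on a grid in \cite[Chapter~6]{GVT}. First, reorder the rulings so that $m_1\leq\cdots\leq m_r$ and $n_1\leq\cdots\leq n_s$; then the multiplicity function $(i,j)\mapsto m_i+n_j-1$ is monotone nondecreasing in each index. Equivalently, reversing both orders produces a matrix $\widetilde m_{ij}=m_{r+1-i}+n_{s+1-j}-1$ that is weakly decreasing in both directions, so that every superlevel set $\{(i,j):m_{ij}\geq k\}$ becomes (after reindexing) a Ferrers diagram inside $[r]\times[s]$. This monotone/Ferrers-type structure is the combinatorial hypothesis under which the ACM characterization in \cite{GVT} applies.

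The main obstacle is matching the precise form of the criterion in \cite{GVT}, which is typically stated in terms of the tuple $\mathcal{A}_Z$ (or an equivalent staircase invariant built from the $a_{i,h}$). Fortunately this matching is exactly the content of Lemma~\ref{sto}: it states that the set $\mathcal{S}$ of $s$-tuples associated to $(M,N)$ is totally ordered with respect to the lexicographic order, which is precisely the staircase property the criterion demands. A secondary route, should the direct citation be awkward, is to verify Cohen--Macaulayness of $J$ by exhibiting a regular sequence: a generic $\ell_1\in\operatorname{span}(a_{1,1},\dots,a_{1,r})$ avoids every associated prime and hence is a non-zero-divisor on $T/J$, and the analogous $\ell_2\in\operatorname{span}(a_{2,1},\dots,a_{2,s})$ can be shown to be a non-zero-divisor on $T/(J,\ell_1)$ by exploiting the separable exponent shape $m_i+n_j-1$ to reduce to a one-dimensional CM situation. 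Either way, the key new input is that the separable form of the multiplicities forces the required monotone/Ferrers structure for free.
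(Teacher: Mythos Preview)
Your proposal is correct and, at its core, is exactly the paper's proof: the paper simply cites the ACM criterion for fat points on a grid from \cite[Theorem~6.21]{GVT} and notes that its combinatorial hypothesis is verified by Lemma~\ref{sto}, which is precisely the route you identify. Your preliminary detour through the monomial ideal $J$ of Remark~\ref{FGM} and the alternative regular-sequence argument are unnecessary, since Theorem~6.21 in \cite{GVT} is already formulated directly for fat points in $\mathbb{P}^1\times\mathbb{P}^1$.
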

\begin{proof} From Theorem 6.21 in \cite{GVT} and Lemma \ref{sto}, we get the conclusion.
\end{proof}

\section{Hadamard product of symbolic powers of ideals of points}

We focus now our attention on the Hadamard product $I(P)^{m} \star I(Q)^{n}$, where $P,Q\in \PP^2$.

Hence, we work on the polynomial ring  $S = \mathbb{K}[\mathbf{x}]=\mathbb{K}\left[x_{0}, x_1, x_{2}\right]$, over an algebraically closed field and we still denote by  $\fm=\left\langle x_{0}, x_1, x_2 \right \rangle$ the irrelevant ideal. 
Since we will multiply, by Hadamard, only two ideals, we change the name of the variables involved in the process, with respect to our original definition. More precisely we will use the extra variables ${\mathbf y}=(y_0,y_1,y_2)$ and ${\mathbf z}=(z_0,z_1,z_2)$  for the Hadamard product of the ideals $I$ and $J$:
$$I \star J=\left(I(\mathbf{y})+J(\mathbf{z})+\left\langle x_{i}-y_{i}z_{i} \mid i=0,1,2\right\rangle\right) \bigcap \mathbb{K}[\mathbf{x}]
.$$
We set $\mathcal{H}= (x_0-y_0z_0,x_1-y_1z_1,x_2-y_2z_2) $.

We start with some preliminary results.

\begin{lemma}\label{gpnew}
	Let $ P=[p_0:p_1:p_2]$ be a point, then
	\begin{itemize}
	\item[i)] if $P\in \PP^2 \setminus \Delta_1$, then $I(P)\star\fm^t  =\fm^t$ ,
	\item[ii)] if $P\in \Delta_1$ then $ \fm^t \subseteq I(P)\star\fm^t $.
	\end{itemize}
\end{lemma}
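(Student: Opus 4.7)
The two parts call for rather different arguments, and I would handle them separately.

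For (i), the natural tool is Theorem \ref{ThmGens}. The ideal $\fm^t$ is generated by all monomials $\bx^{\alpha}$ with $|\alpha|=t$. Since $P\in\PP^2\setminus\Delta_1$ means every coordinate $p_i$ of $P$ is non-zero, Theorem \ref{ThmGens} applies and says $I(P)\star\fm^t$ is generated by the Hadamard transforms $(\bx^{\alpha})^{\star P}=\bx^{\alpha}/P^{\alpha}$ for $|\alpha|=t$. As $P^{\alpha}\neq 0$, each transform is a non-zero scalar multiple of $\bx^{\alpha}$, so the generating set coincides (up to units) with that of $\fm^t$, giving the equality $I(P)\star\fm^t=\fm^t$.

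For (ii), Theorem \ref{ThmGens} is no longer available because $P\in\Delta_1$ has some zero coordinate, so I would work directly from the elimination definition
\[
I(P)\star\fm^t=\bigl(I(P)(\by)+\fm^t(\bz)+\mathcal{H}\bigr)\cap\K[\bx],\qquad \mathcal{H}=(x_0-y_0z_0,\,x_1-y_1z_1,\,x_2-y_2z_2).
\]
Fix a generator $\bx^{\alpha}=x_0^{\alpha_0}x_1^{\alpha_1}x_2^{\alpha_2}$ of $\fm^t$, so $|\alpha|=t$. A short induction on $|\alpha|$, using $x_i=y_iz_i+(x_i-y_iz_i)$ one variable at a time, yields the congruence
\[
\bx^{\alpha}\equiv \by^{\alpha}\bz^{\alpha}\pmod{\mathcal{H}}.
\]
Now $\bz^{\alpha}$ has degree $t$, hence $\bz^{\alpha}\in\fm^t(\bz)$, and therefore $\by^{\alpha}\bz^{\alpha}\in\fm^t(\bz)$. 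Consequently $\bx^{\alpha}\in\fm^t(\bz)+\mathcal{H}\subseteq I(P)(\by)+\fm^t(\bz)+\mathcal{H}$. Since $\bx^{\alpha}\in\K[\bx]$, it survives the elimination and lies in $I(P)\star\fm^t$. Ranging over all $|\alpha|=t$ proves $\fm^t\subseteq I(P)\star\fm^t$.

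The main point of the argument, rather than a true obstacle, is the reduction $\bx^{\alpha}\equiv\by^{\alpha}\bz^{\alpha}\pmod{\mathcal{H}}$, which is what makes the elimination step transparent; once this is recorded, the hypothesis $P\in\Delta_1$ does not even enter the proof of (ii) (it merely marks the regime in which the containment need not be an equality, in contrast with (i)). Thus the whole lemma reduces to one application of Theorem \ref{ThmGens} and one direct manipulation in the polynomial ring $\K[\bx,\by,\bz]$.
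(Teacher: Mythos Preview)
Your argument is correct. Part (i) is essentially identical to the paper's proof: both apply Theorem~\ref{ThmGens} and observe that the Hadamard transform of a monomial is a non-zero scalar multiple of itself.

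For part (ii) the approaches differ. The paper splits into the cases $p_0=0$ (one zero coordinate) and $p_0=p_1=0$ (two zero coordinates), and in each case computes the Hadamard product \emph{exactly}, obtaining $I(P)\star\fm^t=(x_0)+(x_1,x_2)^t$ and $I(P)\star\fm^t=(x_0,x_1)+(x_2)^t$ respectively; the containment $\fm^t\subseteq I(P)\star\fm^t$ is then read off. Your route is more economical: the congruence $\bx^{\alpha}\equiv\by^{\alpha}\bz^{\alpha}\pmod{\mathcal H}$ together with $\bz^{\alpha}\in\fm^t(\bz)$ places every degree-$t$ monomial in the elimination ideal without any case analysis and, as you note, without using the hypothesis $P\in\Delta_1$ at all (indeed your argument shows $\fm^t\subseteq J\star\fm^t$ for \emph{any} homogeneous ideal $J$). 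The trade-off is that the paper's computation yields the precise shape of $I(P)\star\fm^t$, which is not needed for the lemma as stated but is informative; your argument gives only the containment, but that is exactly what is asked for and what is used later in the proof of Theorem~\ref{thmtwopoints}.
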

\begin{proof}
For i) we can use Theorem \ref{ThmGens}. Hence the generators of $I(P)\star\fm^t$ are the Hadamard transformations, with respect to $P$, of the monomials in $\fm^t$.  According to Definition \ref{HADT}, such transformations of the monomials in  $\fm^t$ are the same monomials scaled by a constant $P^I$, hence $I(P)\star\fm^t=\fm^t$.

For ii) we can assume, without loss of generality,  that $ p_0=0$
\begin{flalign*}
		I(P) \st \fm^t &=  \left( I(P)(\mathbf{y})+\fm(\mathbf z)^n + \left( x_{i}-y_{i}z_{i} \mid i=0,1, 2\right)\right) \bigcap \mathbb{K}[\mathbf{x}]\nonumber\\
		& = \left( (y_0,p_2y_1-p_1y_2)+ \left( z_0,z_1, z_2\right)^t+  \left( x_{i}-y_{i}z_{i} \mid i=0, 1, 2\right)\right) \bigcap \mathbb{K}[\mathbf{x}]\\
		& = (x_0) + (x_1,x_2)^t
\end{flalign*}
where the last equality follows from the fact that $y_0$ annihilates $y_0z_0$. Hence  $\fm^t \subseteq I(P)\star\fm^t $.
Similarly, if the point $P$ has $p_0=p_1=0$ one has
	\begin{flalign*}
		I(P) \st \fm^t &=  \left( I(P)(\mathbf{y})+\fm(\mathbf z)^n + \left( x_{i}-y_{i}z_{i} \mid i=0,1, 2\right)\right) \bigcap \mathbb{K}[\mathbf{x}]\nonumber\\
		& = \left( (y_0,y_1)+ \left( z_0,z_1, z_2\right)^t+  \left( x_{i}-y_{i}z_{i} \mid i=0, 1, 2\right)\right) \bigcap \mathbb{K}[\mathbf{x}]\\
		& = (x_0,x_1) + (x_2^t)\supset \fm^t.
	\end{flalign*}
\end{proof}

The following result gives a positive answers to Question \ref{Q1} when the points $P$ and $Q$ have non-zero coordinates. 
\begin{theorem}\label{thmtwopoints}
	Let $ P $ and $ Q $ be two points in $\PP^2\setminus\Delta_{1}$. Then for $ m,n\geq 1$ one has $I(P)^{m} \star I(Q)^{n}=I(P \star Q)^{m+n-1}$.
	
\end{theorem}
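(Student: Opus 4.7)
The plan is to characterize membership in the Hadamard product via its elimination definition and translate the condition into a vanishing condition on the partial derivatives of $f$ at $P\star Q$. First I would observe that since $P,Q\in\PP^2\setminus\Delta_1$ have only nonzero coordinates, also $P\star Q\in\PP^2\setminus\Delta_1$, and each of the ideals $I(P)$, $I(Q)$, $I(P\star Q)$ is a height-$2$ complete intersection. Proposition \ref{CI} then identifies ordinary with symbolic powers: for instance $I(P)^m=I(P)^{(m)}$ is precisely the ideal of polynomials all of whose partial derivatives $\partial^{\gamma}f$ of multi-index $|\gamma|<m$ vanish at $P$, and similarly for $Q$ and $P\star Q$. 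The theorem is thus equivalent to showing that $f\in I(P)^m\star I(Q)^n$ if and only if $(\partial^{\gamma}f)(P\star Q)=0$ for every $|\gamma|\le m+n-2$.

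Reading off the elimination definition of $\star$, the polynomial $f(\bx)$ lies in $I(P)^m\star I(Q)^n$ if and only if, after the substitution $x_i\mapsto y_iz_i$, the polynomial $f(\by\star\bz)$ belongs to the ideal $I(P)^m(\by)+I(Q)^n(\bz)$ of $\K[\by,\bz]$. A Taylor expansion at $(P,Q)$ identifies this sum with the ideal of polynomials $h(\by,\bz)$ whose mixed partials $\partial_{\by}^{\alpha_y}\partial_{\bz}^{\alpha_z}h(P,Q)$ vanish for all $|\alpha_y|<m$ and $|\alpha_z|<n$.

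The computation at the core of the argument is to apply the multivariate chain rule to $f(\by\star\bz)$ and to evaluate at $(P,Q)$, producing an expansion of the shape
\[
\partial_{\by}^{\alpha_y}\partial_{\bz}^{\alpha_z}f(\by\star\bz)\big|_{(P,Q)}=P^{\alpha_z}Q^{\alpha_y}\,(\partial^{\alpha_y+\alpha_z}f)(P\star Q)+\sum_{|\beta|<|\alpha_y+\alpha_z|}c_{\alpha_y,\alpha_z,\beta}\,(\partial^{\beta}f)(P\star Q),
\]
where the scalars $c_{\alpha_y,\alpha_z,\beta}\in\K$ depend only on $P$ and $Q$. Since $P,Q\notin\Delta_1$ the leading coefficient $P^{\alpha_z}Q^{\alpha_y}$ is nonzero, so the system of equations indexed by the bad region $\{(\alpha_y,\alpha_z):|\alpha_y|<m,\ |\alpha_z|<n\}$ is triangular in the natural order on the unknowns $(\partial^{\gamma}f)(P\star Q)$. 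A downward induction on $|\alpha_y+\alpha_z|$, combined with the combinatorial observation that every $\gamma$ with $|\gamma|\le m+n-2$ admits a componentwise splitting $\gamma=\alpha_y+\alpha_z$ with $|\alpha_y|<m$ and $|\alpha_z|<n$, will then show that the vanishing of all these mixed derivatives at $(P,Q)$ is equivalent to $(\partial^{\gamma}f)(P\star Q)=0$ for every $|\gamma|\le m+n-2$, which by the first paragraph is exactly $f\in I(P\star Q)^{m+n-1}$.

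The main obstacle will be the chain-rule and combinatorial bookkeeping: verifying that the leading term is genuinely $P^{\alpha_z}Q^{\alpha_y}(\partial^{\alpha_y+\alpha_z}f)(P\star Q)$ with no cancellation among the multinomial contributions, and confirming that the range $|\gamma|\le m+n-2$ is exactly what one obtains as $|\alpha_y+\alpha_z|$ as $(\alpha_y,\alpha_z)$ varies over the bad region — this is precisely what forces the exponent on the right-hand side of the equality to be $m+n-1$ and not $m+n$.
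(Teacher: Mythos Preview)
Your approach is correct (in characteristic zero) and takes a genuinely different route from the paper's. The paper does not use Taylor expansions or partial derivatives at all: it invokes Sullivant's identity $I(P)^m = I(P) * \fm^m$ (Proposition~\ref{SS}) to rewrite $I(P)^m \star I(Q)^n$ as $[I(P)*\fm^m] \star [I(Q)*\fm^n]$, and then unfolds the resulting four-fold elimination ideal. Lemma~\ref{gpnew}(i), which says $I(P)\star\fm^t=\fm^t$ for $P\notin\Delta_1$, is used to kill two of the cross-terms arising from the expansion of $(y'_i+y''_i)(z'_i+z''_i)$, and Lemma~\ref{mink_join} ($\fm^m*\fm^n=\fm^{m+n-1}$) recombines what remains into $[I(P)\star I(Q)]*\fm^{m+n-1}=I(P\star Q)*\fm^{m+n-1}=I(P\star Q)^{m+n-1}$.

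Your argument is more elementary in that it bypasses the join operation and Sullivant's theorem entirely, and it makes the exponent $m+n-1$ appear transparently from the componentwise splitting $\gamma=\alpha_y+\alpha_z$ with $|\alpha_y|\le m-1$, $|\alpha_z|\le n-1$. The paper's argument is more structural: the exponent comes from the ``Minkowski'' identity $\fm^m*\fm^n=\fm^{m+n-1}$, and the same machinery immediately explains the failure modes in Proposition~\ref{thmtwopoints2} when one of the points lies on a coordinate line (via part~(ii) of Lemma~\ref{gpnew}). Two small remarks on your write-up: the induction on $|\gamma|$ should run \emph{upward}, not downward, since you need the lower-order derivatives $(\partial^{\gamma'}f)(P\star Q)$ with $|\gamma'|<|\gamma|$ to vanish first in order to isolate the leading term; and the derivative characterization of $I(P)^m$ that you rely on is the Zariski--Nagata description, which as stated requires characteristic zero---in positive characteristic one would have to replace ordinary partials by Hasse--Schmidt derivatives throughout.
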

\begin{proof}
From Proposition \ref{SS} we know that
\begin{equation}\label{ISt2p}
I(P)^m= I(P)*\fm^m \mbox{ and } I(Q)^n= I(Q)*\fm^n.
\end{equation}
Thus, $I(P)^m\star I(Q)^{n} =\left[ I(P)*\fm^m\right] \st \left[ I(Q)*\fm^n\right] $ can be computed by applying successively  the definition of  join of two ideals and of Hadamard products of ideals.

For this aim, we define 
\begin{eqnarray*}
\mathcal{H}_x&=& (x_0-x'_0x''_0,x_1-x'_1x''_1,x_2-x'_2x''_2), \\
\mathcal{H}_y&=&(x'_0-y'_0-y''_0,x'_1-y'_1-y''_1,x'_2-y'_2-y''_2) ,\\
\mathcal{H}_z&=&(x''_0-z'_0-z''_0,x''_1-z'_1-z''_1,x''_2-z'_2-z''_2) \\
\hat{\mathcal{H}}&=& \big( x_0-(y'_0+y''_0)(z'_0+z''_0) ,x_1-(y'_1+y''_1)(z'_1+z''_1) ,x_2-(y'_2+y''_2)(z'_2+z''_2) \big)  .
\end{eqnarray*}

Hence we have the following sequence of equalities
\begin{flalign*}
	I(P)^m\star I(Q)^{n} =&\left[ I(P)*\fm^m\right] \st \left[ I(Q)*\fm^n\right] &\\
	=& \left(  \left[ I(P)*\fm^m\right](\bx') + \left[ I(Q)*\fm^n\right](\bx'')+\mathcal{H}_x\right)\bigcap \mathbb{K}[\mathbf{x}] & \\
	=&\left( \left[  \left( I(P)(\by')+\fm^m(\by'')+\mathcal{H}_y\right)\bigcap \mathbb{K}[\mathbf{x}'] \right]\right. &\\ 
	&\qquad\qquad\left. + \left[ \left( I(Q)(\bz')+\fm^n(\bz'')+\mathcal{H}_z\right)\bigcap \mathbb{K}[\mathbf{x''}]  \right]+\mathcal{H}\right) \bigcap \mathbb{K}[\mathbf{x}]  &\\
	=&\left( I(P)(\by')+\fm^m(\by'') +  I(Q)(\bz')+\fm^n(\bz'')+\hat{\mathcal{H}}\right) \bigcap \mathbb{K}[\mathbf{x}]&\\
	=&\left( I(P)(\by')+I(Q)(\bz')+\fm^m(\by'') +\fm^n(\bz'')+\hat{\mathcal{H}}\right) \bigcap \mathbb{K}[\mathbf{x}].&
\end{flalign*}

Denote by  $ \widetilde{I} $ the ideal in the last equality in the previous formula, i.e. 
\[
 \widetilde{I} = I(P)(\by')+I(Q)(\bz')+\fm^m(\by'') +\fm^n(\bz'')+\hat{\mathcal{H}}.
 \]

Since the elements in $\hat{\mathcal{H}}$ can be seen as
\begin{equation}\label{H}
	x_i-(y'_i+y''_i)(z'_i+z''_i) =x_i - y'_iz'_i-y'_iz''_i-z'_iy_i''-y''_iz''_i=0
\end{equation}

we can substitute the ideal  $\hat{\mathcal{H}}$ with the ideal generated by
\[
\begin{array}{l}
k^{(1)}_i-y'_iz'_i=0,\\
k^{(2)}_i-y'_iz''_i=0,\\
k^{(3)}_i-z'_iy_i''=0,\\
k^{(4)}_i-y''_iz''_i=0,\\
x_i-k^{(1)}_i-k^{(2)}_i-k^{(3)}_i-k^{(4)}_i =0
\end{array}
\]
with $i=0,1,2$.

By definition,  $ k^{(1)}_i-y'_iz'_i = 0 $ generates $ I(P)\st I(Q)  = I(P\st Q)$. Note that $ k^{(2)}_i-y'_iz''_i = 0 $ generates $ I(P)\st \fm^n $. By Lemma \ref{gpnew} (i), we know that $ I(P)\st \fm^n  = \fm^n $. Therefore we have $ \tilde{k}^{(2)}_i-z''_i = 0 $. Similarly $ k^{(3)}_i-z'_iy''_i = 0 $ generates $ I(Q)\st \fm^m $. Using the same lemma we have that $ I(Q)\st \fm^m  = \fm^m $, hence $ \tilde{k}^{(3)}_i-y''_i = 0 $. Now we have 
\begin{equation}\label{newX}
	 x_i-k^{(1)}_i-\tilde{k}^{(2)}_i-\tilde{k}^{(3)}_i-k^{(4)}_i =0.
\end{equation}

A simple calculation shows that any term of a form $ f $ of degree $ d $ in $ \tilde{I} $ containing $ \prod\left( \hat{k}^{(2)}_i\right)^{r_i}  $ is zero if $ \prod\left( \hat{k}^{(2)}_i\right)^{r_i} \in \fm^n(\bz'') $ which follows that any term containing $\prod\left({y_i''}^{r_i}{z_i''}^{t_i} \right)$ is zero as well. A similar calculation for terms containing $ \prod\left( \hat{k}^{(3)}_i\right)^{t_i}\in\fm^m(\by'')  $ follows that the term containing $\prod\left({y_i''}^{r_i}{z_i''}^{t_i} \right)$ is zero. Therefore we conclude that any term of a form $ f $ of degree $ d $ in $ \tilde{I} $ containing $\prod \left( k^{(4)}_i\right)^{d_i}   $ is zero. Hence we can simply cancel $ k^{(4)}_i $ from (\ref{newX}). We have that $ x_i-k^{(1)}_i-\tilde{k}^{(2)}_i-\tilde{k}^{(3)}_i = 0 $. Let
\begin{eqnarray*}
\widetilde{\mathcal{H}}&=& \left( x_0-k^{(1)}_0-\tilde{k}^{(2)}_0-\tilde{k}^{(3)}_0,  x_1-k^{(1)}_1-\tilde{k}^{(2)}_1-\tilde{k}^{(3)}_1,  x_2-k^{(1)}_2-\tilde{k}^{(2)}_2-\tilde{k}^{(3)}_2  \right) \\
\widetilde{\mathcal{H}}_{x'}&=& \left( x'_0-k^{(1)}_0,  x'_1-k^{(1)}_1,  x'_2-k^{(1)}_2  \right) \\
\widetilde{\mathcal{H}}_{x''}&=& \left( x''_0-\tilde{k}^{(2)}_0-\tilde{k}^{(3)}_0,  x''_1-\tilde{k}^{(2)}_1-\tilde{k}^{(3)}_1,  x''_2-\tilde{k}^{(2)}_2-\tilde{k}^{(3)}_2  \right)\\
\widetilde{\mathcal{H}}_x&=& \left(  x_0-x''_0-x'_0,   x_1-x''_1-x'_1,  x_2-x''_2-x'_2  \right) . 
\end{eqnarray*}
 Therefore,  
\begin{flalign*}
 \widetilde{I}\cap \mathbb{K}[\mathbf{x}] &= \left( I(P)(\by')+I(Q)(\bz')+\fm^m(\by'') +\fm^n(\bz'')+ \widetilde{\mathcal{H}} \right)\bigcap \mathbb{K}[\mathbf{x}]\\
 &=\left( \left[  \left( I(P)(\by')+I(Q)(\bz')+\widetilde{\mathcal{H}}_{x'}\right)\bigcap \mathbb{K}[\mathbf{x}'] \right]\right. &\\ 
 &\qquad\qquad\left. + \left[ \left(\fm^m(\by'') +\fm^n(\bz'')+\widetilde{\mathcal{H}}_{x''}\right)\bigcap \mathbb{K}[\mathbf{x''}]  \right]+\widetilde{\mathcal{H}}\right) \bigcap \mathbb{K}[\mathbf{x}]  &\\
 &= \left(  \left[ I(P)\st I(Q)\right](\bx') + \left[ \fm^m *\fm^n\right](\bx'')+\widetilde{\mathcal{H}}_x\right)\bigcap \mathbb{K}[\mathbf{x}] & \\
 &=  \left[ I(P)\st I(Q)\right] * \fm^{m+n-1} & \\
 &=  I(P\st Q) * \fm^{m+n-1} & 
\end{flalign*}
and, by Proposition \ref{SS}, the last ideal is equal to  $I(P\st Q) ^{m+n-1}$ and the theorem is proved.
\end{proof}

\begin{remark}
If we remove the condition $P,Q\in \PP^2\setminus\Delta_{1}$ in Theorem \ref{thmtwopoints} we are able to proof only the following inclusion
$$ I(P)^{m} \star I(Q)^{n} \supset I(P \star Q)^{m+n-1}.$$
To prove this inequality is enough to apply the proof of Theorem \ref{thmtwopoints} using part ii) of Lemma \ref{gpnew}.
\end{remark}

We observe that  if $ P\star Q $ is defined, and $ m=n=1 $ then  it follows from the definition of Hadamard product that $ I(P) \star I(Q)=I(P \star Q)$, and hence Question \ref{Q1} has an affirmative answer.
Theorem \ref{thmtwopoints}  shows that Question \ref{Q1} has an affirmative answer, when we consider points which are not in the coordinates lines. 
When one of the point is taken in a coordinate line, the formula in Question \ref{Q1} is no more valid, as stated in the following:

\begin{proposition}\label{thmtwopoints2}
	Let $ P $ and $ Q $ be two points in $ \PP^2$. 
\begin{itemize}
	\item[(a)] If $ P \in \PP^2\setminus\Delta_{1}$ and  $ P\star Q $ is defined then for $ m=1 $ and $ n\geq 1 $ Question \ref{Q1} has an affirmative answer.
	\item[(b)] If $P, Q \in \Delta_{1} \backslash \Delta_{0}$ then Question \ref{Q1} has no affirmative answer but for $ m=n=1 $. 
\end{itemize}
\end{proposition}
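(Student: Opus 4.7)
The plan is to handle the two parts separately. Part (a) follows directly from Theorem \ref{ThmGens}, which applies since $P\in\PP^2\setminus\Delta_1$. I would observe that the Hadamard transformation $f\mapsto f^{\star P}$ from Definition \ref{HADT} coincides with the graded ring automorphism $\phi_P\colon S\to S$ defined by $x_i\mapsto x_i/p_i$. A polynomial vanishes at $Q$ if and only if its image under $\phi_P$ vanishes at $P\star Q$, so $\phi_P(I(Q))=I(P\star Q)$ and hence $\phi_P(I(Q)^n)=I(P\star Q)^n$. Applying Theorem \ref{ThmGens} to any generating set of $I(Q)^n$ then identifies $I(P)\star I(Q)^n$ with $I(P\star Q)^n = I(P\star Q)^{m+n-1}$ (since $m=1$), settling Part (a).

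For Part (b), the case $m=n=1$ is immediate from the definition of the Hadamard product applied to the prime ideals of two points whose product is defined. For $(m,n)\neq(1,1)$, the remark following Theorem \ref{thmtwopoints} supplies the inclusion $I(P)^m\star I(Q)^n\supseteq I(P\star Q)^{m+n-1}$, so I only need to force strict inclusion. Since $P,Q\in\Delta_1\setminus\Delta_0$ each have exactly one zero coordinate, two subcases occur up to permutation of coordinates: either the vanishing indices coincide (say $p_0=q_0=0$, so $P\star Q\in\Delta_1\setminus\Delta_0$ and $I(P\star Q)=(x_0,L)$ for a linear form $L$), or they differ (say $p_0=0=q_1$, so $P\star Q\in\Delta_0$ and $I(P\star Q)=(x_0,x_1)$). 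In both subcases $I(P\star Q)^{m+n-1}$ is a power of a height-two complete intersection of linear forms, hence it is generated in the single degree $m+n-1$ and its initial degree is exactly $m+n-1$.

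To produce the witness of strict inclusion, I would use that $y_0\in I(P)(\by)$, whence $y_0^m\in I(P)^m(\by)$; the relation $x_0-y_0z_0$ then forces $x_0^m\equiv y_0^mz_0^m$ into $I(P)^m\star I(Q)^n$. In the first subcase the symmetric argument using $z_0\in I(Q)(\bz)$ also gives $x_0^n\in I(P)^m\star I(Q)^n$, so $x_0^{\min(m,n)}$ lies there; in the second subcase the analogue $z_1\in I(Q)(\bz)$ supplies $x_1^n\in I(P)^m\star I(Q)^n$ alongside $x_0^m$. In either case the element produced has degree at most $\min(m,n)$, and $\min(m,n)<m+n-1$ precisely when $(m,n)\neq(1,1)$; consequently the witness cannot lie in $I(P\star Q)^{m+n-1}$, giving the desired strict containment. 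The only delicate point, and the one I expect to be the main obstacle, is verifying that these low-degree candidates genuinely survive into the elimination ideal rather than being cancelled by the relations; this reduces, however, to the transparent observation that $y_0^mz_0^m$ already lies in $I(P)^m(\by)$ as a multiple of $y_0^m$, with no further elimination needed.
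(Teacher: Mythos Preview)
Your proof is correct. For part~(a) you take a genuinely different and cleaner route than the paper: the paper argues by direct elimination computations, splitting into the cases $Q\in\Delta_0$ and $Q\in\Delta_1\setminus\Delta_0$ (and along the way also records the negative results for $m>1$, which are not required by the proposition as stated). Your use of Theorem~\ref{ThmGens} together with the observation that $f\mapsto f^{\star P}$ is the graded automorphism $\phi_P$ handles all $Q$ uniformly and reduces the claim to the single line $\phi_P(I(Q)^n)=\phi_P(I(Q))^n=I(P\star Q)^n$. This buys brevity and conceptual clarity; what the paper's computation buys is the extra information that $I(P)^m\star I(Q)^n=I(P\star Q)^n$ even for $m>1$ when $Q\in\Delta_0$.

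For part~(b) the two arguments are essentially the same in spirit: both exhibit a monomial of degree $\min(m,n)$ lying in $I(P)^m\star I(Q)^n$ but not in $I(P\star Q)^{m+n-1}$, using that a coordinate $y_i$ (resp.\ $z_j$) lies in $I(P)(\by)$ (resp.\ $I(Q)(\bz)$) when the corresponding coordinate of $P$ (resp.\ $Q$) vanishes. Your invocation of the remark after Theorem~\ref{thmtwopoints} is harmless but actually unnecessary, since producing an element of $I(P)^m\star I(Q)^n$ not in $I(P\star Q)^{m+n-1}$ already establishes inequality without first knowing one side contains the other. The paper additionally computes $I(P)^m\star I(Q)^n$ exactly in the ``different zero coordinates'' subcase and records the chain $I(P\star Q)^{m+n-1}\subset I(P)^m\star I(Q)^n\subset I(P\star Q)^m$; your argument omits this extra content but fully covers what the proposition asserts.
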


\begin{proof}

\textbf{(a):} without loss of generality let $ P=[p_0:p_1:p_2] $ with $ p_i\neq 0 $ and assume that $ Q\in\Delta_{0} $ has exactly one non-zero coordinate, that is $ Q= [q_0:0:0] $. Assume that $ m=1 $. Since $ P\st Q = Q$, therefore we only need to show that $ 	I(P) \star I(Q)^{n} =I(Q)^{n}$. We have that
\begin{flalign*}
	I(P) \star I(Q)^{n} =& (p_2x_1-p_1x_2,p_2x_0-p_0x_2) \st (x_1,x_2)^n\\
	=&\left( (p_2y_1-p_1y_2,p_2y_0-p_2y_2) +(z_1,z_2)^n + \mathcal{H} \right) \bigcap \mathbb{K}[\mathbf{x}]\\
	=& (x_1,x_2)^n = I(Q)^n.
\end{flalign*}

Let $ m>1 $. Since  $ x_1^n\notin  I(Q)^{n+m-1} $ and we know that $ x_1^n\in  I(P)^m\st I(Q)^{n} $ therefore $ I(Q)^{n+m-1}\neq I(P)^m \star I(Q)^{n}$.

Note that in general we have
\begin{flalign*}
	I(P)^m \star I(Q)^{n} =& (p_2x_1-p_1x_2,p_2x_0-p_0x_2)^m \st (x_1,x_2)^n\\
	=&\left( (p_2y_1-p_1y_2,p_2y_0-p_2y_2)^m +(z_1,z_2)^n + \mathcal{H} \right) \bigcap \mathbb{K}[\mathbf{x}]\\
	=& (x_1,x_2)^n = I(Q)^n=I(P\st Q)^n.
\end{flalign*}

Now let $ Q= [q_0:q_1:0]\in \Delta_{1} \backslash \Delta_{0}  $ and $ m>1 $. We have that 
\begin{flalign*}
	I(P)^m \star I(Q)^{n} =&  (p_2x_1-p_1x_2,p_2x_0-p_0x_2)^m \st (q_1x_0-q_0x_1,x_2)^n\\
	=&\left( (p_2y_1-p_1y_2,p_2y_0-p_0y_2)^m +(q_1z_0-q_0z_1,z_2)^n + \mathcal{H} \right) \bigcap \mathbb{K}[\mathbf{x}]\\
	=& (x_2^n)+(\cdots).
\end{flalign*}
Since $ x_2^n\notin I(P\st Q)^{m+n-1} $ therefore we have that $  I(P\st Q)^{m+n-1} \neq  	I(P)^m \star I(Q)^{n}$.

If $ m=1 $ we have:
\begin{flalign*}
	I(P)\star I(Q)^{n} =&  (p_2x_1-p_1x_2,p_2x_0-p_0x_2) \st (q_1x_0-q_0x_1,x_2)^n\\
	=&\left( (p_2y_1-p_1y_2,p_2y_0-p_0y_2)+(q_1z_0-q_0z_1,z_2)^n + \mathcal{H}
	\right) \bigcap \mathbb{K}[\mathbf{x}]\\
	=& (x_2^n)+(x_2^{n-1}(p_1q_1x_0-p_0q_0x_1))+\cdots+(p_1q_1x_0-p_0q_0x_1)^n\\
	=&(x_2,(p_1q_1x_0-p_0q_0x_1))^n=I(P\st Q)^n.
\end{flalign*}

\textbf{(b):} Let $ m\leq n $. 
Without loos of generality assume that $ P=[p_0:0:p_2] $  and $ Q=[q_0:q_1:0] $.  We have that $ I(P\st Q)=(x_1,x_2)$.
\begin{flalign*}
	I(P)^m\star I(Q)^{n} =&  (p_2x_0-p_0x_2,x_1)^m \st (q_1x_0-q_0x_1,x_2)^n\\
	=&\left( (p_2y_0-p_0y_2,y_1)^m + (q_1z_0-q_0z_1,z_2)^n + \mathcal{H}
	\right) \bigcap \mathbb{K}[\mathbf{x}]\\
	=& (x_1^m,x_2^n)\not\subset (x_1,x_2)^{n+m-1}= I(P\st Q)^{n+m-1}.
\end{flalign*}
Since $(x_1,x_2)^{n+m-1}\subset (x_1^m,x_2^n)\subset (x_1,x_2)^{m} $  therefore $ I(P\st Q)^{n+m-1}\subset I(P)^m\star I(Q)^{n}\subset I(P\st Q)^{m}$.

Now again without loss of generality assume that $ P=[p_0:p_1:0] $  and $ Q=[q_0:q_1:0] $. One can see that  $ x_2^{m}\in 	I(P)^m\star I(Q)^{n}$. Since $ x_2^{m}\not\in I(P\st Q)^{m+n-1} $ therefore the equality fails. Similarly one can show that $ I(P\st Q)^{n+m-1}\subset I(P)^m\star I(Q)^{n}\subset I(P\st Q)^{m}$.

\end{proof}


\section{Hadamard Fat Grids} \label{HFG}

In this section we introduce and study a particular set of fat points in $\mathbb P^2$, that we call {\em Hadamard fat grid}, whose support is a complete intersection. In particular, in Theorem \ref{betti numbers HFG in P2} we describe a graded minimal free resolution of a Hadamard fat grid  using the results from Section \ref{p1xp1}, enlarging the known literature on the minimal graded resolution of homogeneous sets  of fat points in $\mathbb P^2$ supported on a complete intersection.  We also compute the Waldschmidt constant and the resurgence of the ideal defining a Hadamard fat grid (see Proposition \ref{WC} and Corollary \ref{brian}).

 Let $ P_M=\{P_1,\ldots,P_r\} $ and $ Q_N= \{Q_1,\ldots,Q_s\} $  be two sets of collinear points in $ \PP^2\setminus \Delta_1 $ with assigned positive multiplicities, respectively, $ M=\{m_1,\ldots,m_r\}$ and $ N=\{n_1,\ldots,n_s\}$.
 In terms of ideals we have
 \[
 I(P_M)=I(P_1)^{m_1}\cap \cdots \cap I(P_r)^{m_r}
 \mbox{ and }
 I(Q_N)=I(Q_1)^{n_1}\cap \cdots \cap I(Q_s)^{n_s}.\]
 
 \begin{definition}\label{defHFG} Assume that $P_i\star Q_j \not= P_k\star Q_l$ for all $1\leq i<k\leq r$ and $1\leq j<l\leq s$. Then the  set of fat points  defined by $I(P_M)\star I(Q_N)$, is called a Hadamard fat grid and it is denoted by $HFG(P_M,Q_N)$.
  \end{definition}
 
 According to Theorem \ref{thmtwopoints} and Corollary \ref{coldechad}, the ideal of $HFG(P_M,Q_N)$ is 
 \[
\bigcap_{i\in[r]}\bigcap_{j\in[s]} I(P_i\star Q_j)^{m_i+n_j-1}.
\]
 
 By Lemma 3.1 in \cite{BCK},  we know that the Hadamard product $Z\star S$ of a collinear set $Z$ by a point $S\in \PP^2\setminus \Delta_1$ is still a collinear set lying on the line $S\star L$, where $Z\subset L$. Hence, if we denote by $\ell_P$ and $\ell_Q$   the lines in which the sets $P_M$ and $Q_N$ lie respectively, one has
 \[
 P_i\star Q_j \in \ell_P\star Q_j  \,\, \mbox{ for all } i=1, \dots, r  \mbox{ and for all } j=1, \dots, s.
 \]
 And similarly,
\[
 P_i\star Q_j \in P_i\star \ell_Q  \,\, \mbox{ for all } j=1, \dots, s  \mbox{ and for all } i=1, \dots, r.
 \]
This  shows that $HFG(P_M,Q_N)$ has the structure of a planar grid. Specifically, it is a set of fat points whose support is a complete intersection of type $(r,s)$ in $\mathbb P^2$.

\begin{example}
Figure \ref{hfg1} shows a Hadamard fat grid for $r=4$ and $s=5$; Figure \ref{hfg1}(a) shows the geometric structure, with all lines involved in the grid, while in Figure \ref{hfg1}(b) we represent the multiplicities of each point of the grid.
\end{example}

\begin{figure}
\centering
\begin{tabular}{cc}
\begin{tikzpicture}
   [scale=0.55,auto=center]
   
 \draw(9,7.3) node[scale=0.6] {$\ell_Q$};     
\draw[dotted] (-1,7) -- (-0.5,7);
\draw(-0.5,7) -- (9,7);
\draw[dotted] (9,7) -- (9.5,7);

  \node[circle, draw=black!100,fill=black!100,inner sep=2pt]  at (0,7) {};
   \node[circle, draw=black!100,fill=black!100,inner sep=2pt]  at (2,7) {};
   \node[circle, draw=black!100,fill=black!100,inner sep=2pt]  at (4,7) {};
   \node[circle, draw=black!100,fill=black!100,inner sep=2pt]  at (6,7) {};
   \node[circle, draw=black!100,fill=black!100,inner sep=2pt]  at (8,7) {};

   \draw(0,7.5) node[scale=0.7] {$Q_1$};  
   \draw(2,7.5) node[scale=0.7] {$Q_2$};  
   \draw(4,7.5) node[scale=0.7] {$Q_3$};  
   \draw(6,7.5) node[scale=0.7] {$Q_4$};  
   \draw(8,7.5) node[scale=0.7] {$Q_5$};  

 \draw(-1.7,-1.8) node[scale=0.6] {$\ell_P$};  
\draw[dotted] (-2,5.5) -- (-2,6);
\draw (-2,-2) -- (-2,5.5);
\draw[dotted] (-2,-2.5) -- (-2,-2);
  
    \node[circle, draw=black!100,fill=black!100,inner sep=2pt]  at (-2,5) {};
    \node[circle, draw=black!100,fill=black!100,inner sep=2pt]  at (-2,3) {};
    \node[circle, draw=black!100,fill=black!100,inner sep=2pt]  at (-2,1) {};
    \node[circle, draw=black!100,fill=black!100,inner sep=2pt]  at (-2,-1) {};

   \draw(-2.5,5.05) node[scale=0.7] {$P_1$};  
   \draw(-2.5,3.05) node[scale=0.7] {$P_2$};  
   \draw(-2.5,1.05) node[scale=0.7] {$P_3$};  
   \draw(-2.5,-0.95) node[scale=0.7] {$P_4$};  

\draw(9.3,4.7) node[scale=0.6] {$\ell_Q\star P_1$};     
\draw[dotted] (-1,5) -- (-0.5,5);
\draw(-0.5,5) -- (9,5);
\draw[dotted] (9,5) -- (9.5,5);

\draw(9.3,2.7) node[scale=0.6] {$\ell_Q\star P_2$};     
\draw[dotted] (-1,3) -- (-0.5,3);
\draw(-0.5,3) -- (9,3);
\draw[dotted] (9,3) -- (9.5,3);

\draw(9.3,0.7) node[scale=0.6] {$\ell_Q\star P_3$};     
\draw[dotted] (-1,1) -- (-0.5,1);
\draw(-0.5,1) -- (9,1);
\draw[dotted] (9,1) -- (9.5,1);

\draw(9.3,-1.3) node[scale=0.6] {$\ell_Q\star P_4$};     
\draw[dotted] (-1,-1) -- (-0.5,-1);
\draw(-0.5,-1) -- (9,-1);
\draw[dotted] (9,-1) -- (9.5,-1);

\draw(0.7,-1.8) node[scale=0.6] {$\ell_P\star Q_1$};  
\draw[dotted] (0,5.5) -- (0,6);
\draw (0,-2) -- (0,5.5);
\draw[dotted] (0,-2.5) -- (0,-2);

\draw(2.7,-1.8) node[scale=0.6] {$\ell_P\star Q_2$};  
\draw[dotted] (2,5.5) -- (2,6);
\draw (2,-2) -- (2,5.5);
\draw[dotted] (2,-2.5) -- (2,-2);

\draw(4.7,-1.8) node[scale=0.6] {$\ell_P\star Q_3$};  
\draw[dotted] (4,5.5) -- (4,6);
\draw (4,-2) -- (4,5.5);
\draw[dotted] (4,-2.5) -- (4,-2);

\draw(6.7,-1.8) node[scale=0.6] {$\ell_P\star Q_4$};  
\draw[dotted] (6,5.5) -- (6,6);
\draw (6,-2) -- (6,5.5);
\draw[dotted] (6,-2.5) -- (6,-2);

\draw(8.7,-1.8) node[scale=0.6] {$\ell_P\star Q_5$};  
\draw[dotted] (8,5.5) -- (8,6);
\draw (8,-2) -- (8,5.5);
\draw[dotted] (8,-2.5) -- (8,-2);

  \node[circle, draw=black!100,fill=black!100,inner sep=2pt]  at (0,5) {};
   \node[circle, draw=black!100,fill=black!100,inner sep=2pt]  at (2,5) {};
   \node[circle, draw=black!100,fill=black!100,inner sep=2pt]  at (4,5) {};
   \node[circle, draw=black!100,fill=black!100,inner sep=2pt]  at (6,5) {};
   \node[circle, draw=black!100,fill=black!100,inner sep=2pt]  at (8,5) {};

  \node[circle, draw=black!100,fill=black!100,inner sep=2pt]  at (0,3) {};
   \node[circle, draw=black!100,fill=black!100,inner sep=2pt] (n1) at (2,3) {};
   \node[circle, draw=black!100,fill=black!100,inner sep=2pt] (n1) at (4,3) {};
   \node[circle, draw=black!100,fill=black!100,inner sep=2pt] (n1) at (6,3) {};
   \node[circle, draw=black!100,fill=black!100,inner sep=2pt] (n1) at (8,3) {};

  \node[circle, draw=black!100,fill=black!100,inner sep=2pt] (n1) at (0,1) {};
   \node[circle, draw=black!100,fill=black!100,inner sep=2pt] (n1) at (2,1) {};
   \node[circle, draw=black!100,fill=black!100,inner sep=2pt] (n1) at (4,1) {};
   \node[circle, draw=black!100,fill=black!100,inner sep=2pt] (n1) at (6,1) {};
   \node[circle, draw=black!100,fill=black!100,inner sep=2pt] (n1) at (8,1) {};

  \node[circle, draw=black!100,fill=black!100,inner sep=2pt] (n1) at (0,-1) {};
   \node[circle, draw=black!100,fill=black!100,inner sep=2pt] (n1) at (2,-1) {};
   \node[circle, draw=black!100,fill=black!100,inner sep=2pt] (n1) at (4,-1) {};
   \node[circle, draw=black!100,fill=black!100,inner sep=2pt] (n1) at (6,-1) {};
   \node[circle, draw=black!100,fill=black!100,inner sep=2pt] (n1) at (8,-1) {};


\draw(0.7,5.3) node[scale=0.5] {$Q_1\star P_1$};  
\draw(2.7,5.3) node[scale=0.5] {$Q_2 \star P_1$};  
\draw(4.7,5.3) node[scale=0.5] {$Q_3 \star P_1$};  
\draw(6.7,5.3) node[scale=0.5] {$Q_4 \star P_1$};  
\draw(8.7,5.3) node[scale=0.5] {$Q_5 \star P_1$};  

\draw(0.7,3.3) node[scale=0.5] {$Q_1 \star P_2$};  
\draw(2.7,3.3) node[scale=0.5] {$Q_2 \star P_2$};  
\draw(4.7,3.3) node[scale=0.5] {$Q_3 \star P_2$}; 
\draw(6.7,3.3) node[scale=0.5] {$Q_4 \star P_2$}; 
\draw(8.7,3.3) node[scale=0.5] {$Q_5 \star P_2$}; 

\draw(0.7,1.3) node[scale=0.5] {$Q_1 \star P_3$};  
\draw(2.7,1.3) node[scale=0.5] {$Q_2 \star P_3$};  
\draw(4.7,1.3) node[scale=0.5] {$Q_3 \star P_3$}; 
\draw(6.7,1.3) node[scale=0.5] {$Q_4 \star P_3$}; 
\draw(8.7,1.3) node[scale=0.5] {$Q_5 \star P_3$}; 

\draw(0.7,-0.7) node[scale=0.5] {$Q_1 \star P_4$};  
\draw(2.7,-0.7) node[scale=0.5] {$Q_2 \star P_4$};  
\draw(4.7,-0.7) node[scale=0.5] {$Q_3 \star P_4$}; 
\draw(6.7,-0.7) node[scale=0.5] {$Q_4 \star P_4$}; 
\draw(8.7,-0.7) node[scale=0.5] {$Q_5 \star P_4$}; 

\end{tikzpicture}&

\begin{tikzpicture}
   [scale=0.55,auto=center]
   
 
\draw[dotted] (-1,7) -- (-0.5,7);
\draw(-0.5,7) -- (9,7);
\draw[dotted] (9,7) -- (9.5,7);

  \node[circle, draw=black!100,fill=black!100,inner sep=2pt]  at (0,7) {};
   \node[circle, draw=black!100,fill=black!100,inner sep=2pt]  at (2,7) {};
   \node[circle, draw=black!100,fill=black!100,inner sep=2pt]  at (4,7) {};
   \node[circle, draw=black!100,fill=black!100,inner sep=2pt]  at (6,7) {};
   \node[circle, draw=black!100,fill=black!100,inner sep=2pt]  at (8,7) {};

   \draw(0,7.5) node[scale=0.7] {$n_1$};  
   \draw(2,7.5) node[scale=0.7] {$n_2$};  
   \draw(4,7.5) node[scale=0.7] {$n_3$};  
   \draw(6,7.5) node[scale=0.7] {$n_4$};  
   \draw(8,7.5) node[scale=0.7] {$n_5$};  


\draw[dotted] (-2,5.5) -- (-2,6);
\draw (-2,-2) -- (-2,5.5);
\draw[dotted] (-2,-2.5) -- (-2,-2);
  
    \node[circle, draw=black!100,fill=black!100,inner sep=2pt]  at (-2,5) {};
    \node[circle, draw=black!100,fill=black!100,inner sep=2pt]  at (-2,3) {};
    \node[circle, draw=black!100,fill=black!100,inner sep=2pt]  at (-2,1) {};
    \node[circle, draw=black!100,fill=black!100,inner sep=2pt]  at (-2,-1) {};

   \draw(-2.5,5.05) node[scale=0.7] {$m_1$};  
   \draw(-2.5,3.05) node[scale=0.7] {$m_2$};  
   \draw(-2.5,1.05) node[scale=0.7] {$m_3$};  
   \draw(-2.5,-0.95) node[scale=0.7] {$m_4$};  

\draw[dotted] (-1,5) -- (-0.5,5);
\draw(-0.5,5) -- (9,5);
\draw[dotted] (9,5) -- (9.5,5);

\draw[dotted] (-1,3) -- (-0.5,3);
\draw(-0.5,3) -- (9,3);
\draw[dotted] (9,3) -- (9.5,3);

\draw[dotted] (-1,1) -- (-0.5,1);
\draw(-0.5,1) -- (9,1);
\draw[dotted] (9,1) -- (9.5,1);

\draw[dotted] (-1,-1) -- (-0.5,-1);
\draw(-0.5,-1) -- (9,-1);
\draw[dotted] (9,-1) -- (9.5,-1);

\draw[dotted] (0,5.5) -- (0,6);
\draw (0,-2) -- (0,5.5);
\draw[dotted] (0,-2.5) -- (0,-2);

\draw[dotted] (2,5.5) -- (2,6);
\draw (2,-2) -- (2,5.5);
\draw[dotted] (2,-2.5) -- (2,-2);

\draw[dotted] (4,5.5) -- (4,6);
\draw (4,-2) -- (4,5.5);
\draw[dotted] (4,-2.5) -- (4,-2);

\draw[dotted] (6,5.5) -- (6,6);
\draw (6,-2) -- (6,5.5);
\draw[dotted] (6,-2.5) -- (6,-2);

\draw[dotted] (8,5.5) -- (8,6);
\draw (8,-2) -- (8,5.5);
\draw[dotted] (8,-2.5) -- (8,-2);

  \node[circle, draw=black!100,fill=black!100,inner sep=2pt]  at (0,5) {};
   \node[circle, draw=black!100,fill=black!100,inner sep=2pt]  at (2,5) {};
   \node[circle, draw=black!100,fill=black!100,inner sep=2pt]  at (4,5) {};
   \node[circle, draw=black!100,fill=black!100,inner sep=2pt]  at (6,5) {};
   \node[circle, draw=black!100,fill=black!100,inner sep=2pt]  at (8,5) {};

  \node[circle, draw=black!100,fill=black!100,inner sep=2pt]  at (0,3) {};
   \node[circle, draw=black!100,fill=black!100,inner sep=2pt] (n1) at (2,3) {};
   \node[circle, draw=black!100,fill=black!100,inner sep=2pt] (n1) at (4,3) {};
   \node[circle, draw=black!100,fill=black!100,inner sep=2pt] (n1) at (6,3) {};
   \node[circle, draw=black!100,fill=black!100,inner sep=2pt] (n1) at (8,3) {};

  \node[circle, draw=black!100,fill=black!100,inner sep=2pt] (n1) at (0,1) {};
   \node[circle, draw=black!100,fill=black!100,inner sep=2pt] (n1) at (2,1) {};
   \node[circle, draw=black!100,fill=black!100,inner sep=2pt] (n1) at (4,1) {};
   \node[circle, draw=black!100,fill=black!100,inner sep=2pt] (n1) at (6,1) {};
   \node[circle, draw=black!100,fill=black!100,inner sep=2pt] (n1) at (8,1) {};

  \node[circle, draw=black!100,fill=black!100,inner sep=2pt] (n1) at (0,-1) {};
   \node[circle, draw=black!100,fill=black!100,inner sep=2pt] (n1) at (2,-1) {};
   \node[circle, draw=black!100,fill=black!100,inner sep=2pt] (n1) at (4,-1) {};
   \node[circle, draw=black!100,fill=black!100,inner sep=2pt] (n1) at (6,-1) {};
   \node[circle, draw=black!100,fill=black!100,inner sep=2pt] (n1) at (8,-1) {};


\draw(0.85,5.3) node[scale=0.4] {$m_1+n_1-1$};  
\draw(2.85,5.3) node[scale=0.4] {$m_1+n_2-1$};  
\draw(4.85,5.3) node[scale=0.4] {$m_1+n_3-1$};  
\draw(6.85,5.3) node[scale=0.4] {$m_1+n_4-1$};  
\draw(8.85,5.3) node[scale=0.4] {$m_1+n_5-1$};  

\draw(0.85,3.3) node[scale=0.4] {$m_2+n_1-1$};  
\draw(2.85,3.3) node[scale=0.4] {$m_2+n_2-1$};  
\draw(4.85,3.3) node[scale=0.4] {$m_2+n_3-1$}; 
\draw(6.85,3.3) node[scale=0.4] {$m_2+n_4-1$}; 
\draw(8.85,3.3) node[scale=0.4] {$m_2+n_5-1$}; 

\draw(0.85,1.3) node[scale=0.4] {$m_3+n_1-1$};  
\draw(2.85,1.3) node[scale=0.4] {$m_3+n_2-1$};  
\draw(4.85,1.3) node[scale=0.4] {$m_3+n_3-1$}; 
\draw(6.85,1.3) node[scale=0.4] {$m_3+n_4-1$}; 
\draw(8.85,1.3) node[scale=0.4] {$m_3+n_5-1$}; 

\draw(0.85,-0.7) node[scale=0.4] {$m_4+n_1-1$};  
\draw(2.85,-0.7) node[scale=0.4] {$m_4+n_2-1$};  
\draw(4.85,-0.7) node[scale=0.4] {$m_4+n_3-1$}; 
\draw(6.85,-0.7) node[scale=0.4] {$m_4+n_4-1$}; 
\draw(8.85,-0.7) node[scale=0.4] {$m_4+n_5-1$}; 

\end{tikzpicture}\\
(a) & (b)
\end{tabular}
\caption{$HFG(P_M,Q_N)$.}\label{hfg1}
\end{figure}
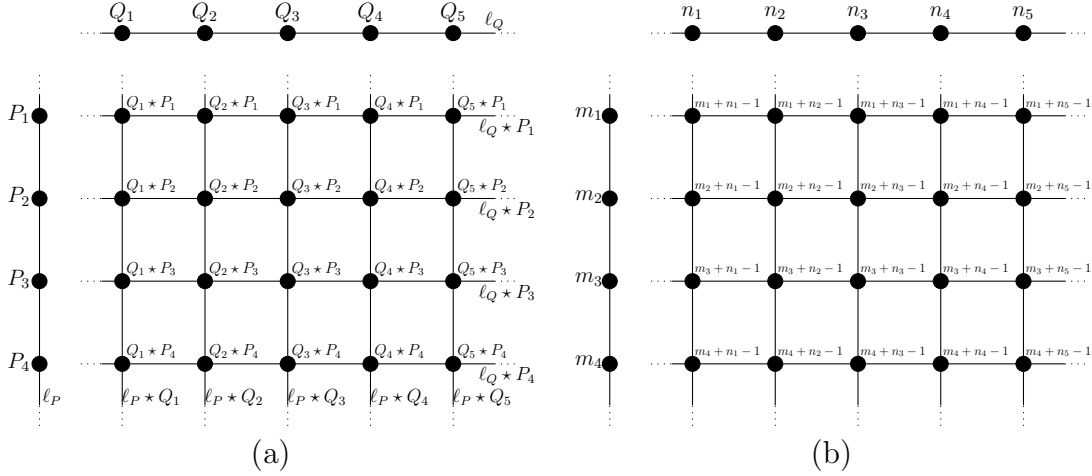

\begin{remark}\label{remarkmultiplicities}
From Figure \ref{hfg1}(b) we see that the multiplicities of the points in the grid have a specific behaviour. 
If we assume that  $m_i\leq m_{i+1}$, for $i=1, \dots, r-1$ and $ n_j\leq n_{j+1}$ for each $j=1, \dots, s-1$, then the multiplicities of $P_i\star Q_j$ and $P_i\star Q_{j+1}$ differ of $n_{j+1}-n_j$ for all $i=1, \dots, r$. Similarly  the multiplicities of $P_i\star Q_j$ and $P_{i+1}\star Q_{j}$ differ of $m_{i+1}-m_i$ for all $j=1, \dots, s$. 
Hence the Hadamard fat grids are a subclass of all possible fat grids in the plane.
\end{remark}

From the rest of the paper we assume that $s\geq r$ and the multiplicities are ordered in non-decreasing order, that is  $m_i\leq m_{i+1}$, for $i=1, \dots, r-1$ and $ n_j\leq n_{j+1}$ for each $j=1, \dots, s-1$. We denote by $\mathcal{I}(P_M,Q_N)$ the ideal of $HFG(P_M,Q_N)$.

To short the notation, set $m_{ij}=m_i+n_j-1$ for $i=1,\dots,r$ and $j=1,\dots,s$.

\begin{lemma}\label{ZY}
Let  $Z$ be a set of fat points in $\mathbb P^1\times \mathbb P^1$ whose support is a complete intersection of type $(r,s)$ (or $(r,s)$-grid))  and whose multiplicities $m_{ij}$ are the same as a Hadamard Fat grid $HFG(P_M,Q_N)$. Then $Z$ share the graded same Betti numbers as a set of fat points $Y$ in $\mathbb{P}^2$.
\end{lemma}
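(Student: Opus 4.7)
The plan is to invoke directly the bridge described in Remark \ref{FGM}. First I would observe that the hypotheses of Proposition \ref{acm} are met: the support of $Z$ is an $(r,s)$-grid and its multiplicities have exactly the form $m_i+n_j-1$ required there. Hence $Z$ is an ACM set of fat points in $\mathbb{P}^1\times\mathbb{P}^1$.

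Next I would reinterpret the bigraded coordinate ring $R=\mathbb{K}[x_{1,0},x_{1,1},x_{2,0},x_{2,1}]$ simultaneously as the standard-graded coordinate ring of $\mathbb{P}^3$. As recalled in Remark \ref{FGM}, the ideal $I_Z$ then defines a 1-dimensional subscheme of $\mathbb{P}^3$: a union of fat lines supported on the rulings of the Segre quadric meeting the $rs$ points in the support of $Z$. Because $Z$ is ACM when viewed in $\mathbb{P}^1\times\mathbb{P}^1$, the corresponding 1-dimensional scheme in $\mathbb{P}^3$ is ACM as well (this is the key content of Theorem 2 of \cite{FGM} and the discussion in Remark \ref{FGM}).

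Finally I would pass to $\mathbb{P}^2$ by a general hyperplane section. Choose a linear form $\ell\in R_{(1,1)}$ (or rather a general linear form when $R$ is viewed as the coordinate ring of $\mathbb{P}^3$) which is a non-zero-divisor on $R/I_Z$; such an $\ell$ exists by the ACM (equivalently Cohen–Macaulay) property. Set $Y:=Z\cap V(\ell)$. Then $Y$ is 0-dimensional in $\mathbb{P}^2$ and is a set of fat points, since a generic hyperplane meets each fat line component of $Z$ transversely in a single fat point with unchanged multiplicity. Moreover, because $\ell$ is a regular element on $R/I_Z$, modding out by $\ell$ preserves the graded Betti numbers, so the minimal graded free resolution of $Y$ in $\mathbb{K}[\mathbb{P}^2]$ has the same shifts and ranks as that of $Z$ in $R$.

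The main subtlety I expect is verifying that the section $Z\cap V(\ell)$ really is a scheme of the ``fat points in $\mathbb{P}^2$'' type rather than some more degenerate 0-dimensional scheme — that is, that each primary component of $I_Z$ intersects the generic hyperplane in the ideal of a fat point with the corresponding multiplicity. This is routine once $\ell$ is chosen generally enough to avoid the singular locus of $Z_{\mathrm{red}}$, since then the hyperplane meets each component line transversely at a single smooth point, and the local structure of $Z$ along that line is preserved after slicing.
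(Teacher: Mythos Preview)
Your proposal is correct and follows essentially the same route as the paper's proof: invoke Proposition \ref{acm} to get $Z$ ACM, reinterpret $I_Z$ as the ideal of a $1$-dimensional ACM scheme of fat lines in $\mathbb{P}^3$ via Remark \ref{FGM}, then slice by a general hyperplane to obtain $Y\subset\mathbb{P}^2$ with the same graded Betti numbers. If anything, your discussion of the transversality subtlety (why the section is genuinely a fat-point scheme) is more explicit than what the paper records.
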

\begin{proof}
From Lemma \ref{acm}, $Z$ is an ACM set of fat points in $\mathbb P^1\times \mathbb P^1$  and from Remark \ref{FGM}, its ideal $I_{Z}$ defines a set of fat lines in $\mathbb P^{3}$,  and it still continues to be ACM and $1$-dimensional. Hence, after a ``proper hyperplane section" we get a set of fat points $Y$ in $\mathbb P^2$ that has the same graded Betti numbers as $Z$. 
That is, if $\ell$ is a proper hyperplane section, we have
$$R/I_{Z}\cong {\frac{R+(\ell)}{(\ell)}}/{\frac{I_Z+(\ell)}{(\ell)}}\cong K[\mathbb{P}^2]/I_Y$$
\noindent the former of which is ACM.
\end{proof}

\begin{lemma}\label{same betti in p2} Let $X$ and $X'$ be two sets of fat points in $\mathbb{P}^2$ whose support is an $(r,s)-grid$ and with the same multiplicities $m_{ij}$  as a $HFG(P_M,Q_N)$. Then $X$ and $X'$ share the same graded Betti numbers. 
\end{lemma}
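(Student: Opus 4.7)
The plan is to factor the comparison of $X$ and $X'$ through a single ACM model in $\mathbb{P}^1 \times \mathbb{P}^1$ whose Betti numbers depend only on the combinatorial data of the multiplicities. Let $Z$ denote the set of fat points in $\mathbb{P}^1 \times \mathbb{P}^1$ supported on an $(r,s)$-grid and carrying the multiplicity pattern $m_{ij}=m_i+n_j-1$. By Proposition \ref{acm}, $Z$ is ACM, and Theorem \ref{bettinumbersftpts} expresses a bigraded minimal free resolution of $R/I(Z)$ purely in terms of the tuple $\mathcal{A}_Z$; this tuple is extracted from the integers $m_{ij}$ alone and is insensitive to any particular geometric realization of the grid.

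The core step is then to invoke the construction behind Lemma \ref{ZY} twice. The argument described in Remark \ref{FGM} views $I(Z)$ as the defining ideal of a $1$-dimensional ACM set of fat lines in $\mathbb{P}^3$ and then takes a proper hyperplane section, producing a set of fat points in $\mathbb{P}^2$ with $(r,s)$-grid support and multiplicities $m_{ij}$ whose graded Betti numbers equal those of $Z$ (after collapsing the bigrading to the single grading). I would apply this procedure first to $X$ and then to $X'$: in each case the rulings of the supporting grid yield the two products of linear forms cutting out the CI-support, and up to a suitable linear change of coordinates on $\mathbb{P}^3$ these can be identified with the two standard pullbacks from $\mathbb{P}^1 \times \mathbb{P}^1$, so that both $X$ and $X'$ arise as proper hyperplane sections of (projectively equivalent copies of) the same scheme lifted from $Z$. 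Consequently both $X$ and $X'$ carry the graded Betti numbers of $Z$, hence of each other.

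The main obstacle I expect is the verification that the correspondence of Remark \ref{FGM} is genuinely insensitive to the specific realization of the grid in $\mathbb{P}^2$, so that it applies uniformly to any $X$ rather than only to a privileged realization. Concretely, one must check that the change of coordinates sending the two products of linear forms that define the CI-support of $X$ (respectively of $X'$) to the two standard products from $\mathbb{P}^1\times\mathbb{P}^1$ preserves the graded Betti numbers of the fat point scheme, and that the proper hyperplane section used to descend from $\mathbb{P}^3$ to $\mathbb{P}^2$ remains proper for both realizations. Once these verifications are in place — they follow from the $1$-dimensional ACM property of the lifted scheme together with genericity of the hyperplane — the two reductions identify with the same $Z$, and the coincidence of Betti numbers for $X$ and $X'$ follows at once.
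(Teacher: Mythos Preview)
Your overall plan---factor through a single Cohen--Macaulay model whose Betti numbers depend only on the multiplicity data---is the right one, and it matches the paper's strategy. But the specific intermediary you choose is too small, and this creates a genuine gap.

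You propose to realize an arbitrary grid $X\subset\PP^2$ as a hyperplane section of the fat-line configuration in $\PP^3$ attached to $Z\subset\PP^1\times\PP^1$. The trouble is that those lines in $\PP^3$ are of the form $V(A_{1,i},A_{2,j})$ with $A_{1,i}\in\K[x_{1,0},x_{1,1}]$ and $A_{2,j}\in\K[x_{2,0},x_{2,1}]$; for varying $i$ the planes $V(A_{1,i})$ all contain the line $\ell_1=V(x_{1,0},x_{1,1})$, and for varying $j$ the planes $V(A_{2,j})$ all contain $\ell_2=V(x_{2,0},x_{2,1})$. Consequently, for any hyperplane $H$, the $r$ lines $H\cap V(A_{1,i})$ are concurrent (through $H\cap\ell_1$) and the $s$ lines $H\cap V(A_{2,j})$ are concurrent (through $H\cap\ell_2$). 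A linear change of coordinates on $\PP^3$ cannot undo this: the incidence ``all $V(A_{1,i})$ share a line'' is projectively invariant. So the only $(r,s)$-grids in $\PP^2$ you can reach this way are those whose two rulings are pencils, and a general grid---including a generic Hadamard fat grid---is not of this type. Your acknowledged obstacle is therefore real, and the proposed fix via change of coordinates does not resolve it.

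The paper sidesteps this by lifting one step higher: it passes to the height-$2$ monomial ideal $J=\bigcap_{i,j}(a_{1,i},a_{2,j})^{m_{ij}}$ in the polynomial ring $T$ in $r+s$ variables (exactly the construction of Remark~\ref{FGM}). The ACM property of $Z$ forces $T/J$ to be Cohen--Macaulay (Theorem~3.2 in \cite{FGM}). Now \emph{any} grid $X\subset\PP^2$ is obtained from the scheme defined by $J$ by the substitution $a_{1,i}\mapsto L_i$, $a_{2,j}\mapsto M_j$, where $L_1,\ldots,L_r,M_1,\ldots,M_s\in\K[x_0,x_1,x_2]$ are the linear forms cutting out the grid lines of $X$; this is a sequence of $r+s-3$ proper hyperplane sections of a CM scheme and hence preserves graded Betti numbers. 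Doing this once for $X$ and once for $X'$ gives the lemma. The key missing idea in your argument is precisely this larger universal model $J$: it has enough variables to accommodate arbitrary (not just concurrent) line arrangements, which the $4$-variable model coming from $\PP^1\times\PP^1$ does not.
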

\begin{proof}  Following again the same method as Theorem 3.2 in  \cite{FGM}, Remark \ref{FGM} and Lemma \ref{ZY}, we  construct  the polynomial ring $T$ in the $r+s$ new variables and form a height $2$ monomial ideal $J$ in $T$ given by the intersection of ideals of the form $(a_{1,i}, a_{2,j})^{m_{ij}}$ corresponding to the components of $X$. Thus, starting from the scheme defined by the ideal $J$, after suitable sequences of proper hyperplane sections we can construct a set (fat) of lines in $\mathbb{P}^3$  that corresponds to an ACM set of fat points $Z$ in $\mathbb P^1\times \mathbb P^1$ and since,  from Proposition \ref{acm}, $Z$ is ACM then, from Theorem 3.2 in \cite{FGM}, $T/J$ is CM . 
Analogously, we can construct a set of lines in $\mathbb{P}^3$  that corresponds to an ACM set of fat points $Z'$ in $\mathbb P^1\times \mathbb P^1$ and since $Z'$ is ACM then $T/J$ is CM. 
Again, starting from the scheme defined by the ideal $J$, after two other suitable sequences of hyperplane sections, we get two sets of fat points $X$ and $X'$  in $\mathbb P^2$ that both share the same graded Betti numbers as $Z$ and $Z'$ ( and as $J$).
\end{proof}

We show the main two results of this section.

\begin{theorem}\label{samebetti} Let $X$ be a Hadamard fat grid $HFG(P_M,Q_N)$  in $\mathbb P^2$ and $Z$ be an ACM set of fat points in $\mathbb P^1\times \mathbb P^1$   supported on an $(r,s)$-grid with the same multiplicities $m_{ij}$ as the Hadamard fat grid $X$. Then $X$ and $Z$ share the same Betti numbers.
\end{theorem}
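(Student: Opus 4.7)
The plan is to chain together the two preparatory lemmas already proved in this section. First, I would apply Lemma \ref{ZY} to the given ACM configuration $Z \subset \mathbb{P}^1 \times \mathbb{P}^1$: this produces a set of fat points $Y$ in $\mathbb{P}^2$ sharing the same graded Betti numbers as $Z$. The key observation is that the hyperplane sections used in the proof of Lemma \ref{ZY} preserve the bigraded grid structure, so $Y$ is supported on an $(r,s)$-grid in $\mathbb{P}^2$ with exactly the multiplicities $m_{ij}=m_i+n_j-1$ dictated by $M$ and $N$.

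Next, I would invoke Lemma \ref{same betti in p2} with $X'=Y$ and $X=HFG(P_M,Q_N)$. Both $X$ and $Y$ are sets of fat points in $\mathbb{P}^2$ whose support is an $(r,s)$-grid and whose multiplicities coincide with the prescribed $m_{ij}$, hence Lemma \ref{same betti in p2} tells us that $X$ and $Y$ have the same graded Betti numbers.

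Combining the two equalities of Betti tables, $\beta(X)=\beta(Y)=\beta(Z)$, yields the desired conclusion. The essential input is Proposition \ref{acm}, which guarantees that the $(r,s)$-grid with multiplicities $m_i+n_j-1$ is ACM in $\mathbb{P}^1\times\mathbb{P}^1$, because this is what allows us to pass freely between $Z$, a set of fat lines in $\mathbb{P}^3$, and a set of fat points in $\mathbb{P}^2$ via successive general hyperplane sections without disturbing the graded Betti numbers.

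The only delicate point, and therefore the main obstacle one should double-check, is that in Lemma \ref{ZY} the intermediate scheme $T/J$ is Cohen-Macaulay so that a general linear form is a non-zero-divisor; this is ensured precisely by Lemma \ref{sto} together with Proposition \ref{acm}, and so no additional work is required here. Modulo that verification, the theorem follows formally from stringing the two lemmas together.
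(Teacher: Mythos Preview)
Your proposal is correct and follows essentially the same route as the paper: apply Lemma~\ref{ZY} to obtain a set $Y\subset\mathbb{P}^2$ with the same Betti numbers as $Z$, then use Lemma~\ref{same betti in p2} to identify the Betti numbers of $X$ and $Y$. Your write-up is in fact more explicit than the paper's (you spell out why $Y$ is still an $(r,s)$-grid with the correct $m_{ij}$ and flag the Cohen--Macaulay verification), but the strategy is identical.
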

\begin{proof} From  Lemma \ref{ZY} we can construct a set of fat points $Y$ an $(r,s)$-grid  in $\mathbb P^2$ that preserves the same multiplicities on $m_{ij}$ and the same Betti numbers as $Z$. From Lemma \ref{same betti in p2}, $X$ and $Y$ (and  $Z$) share the graded same Betti numbers.

\end{proof}

We now are able  to compute a minimal free graded resolution of a Hadamard fat grid. We will use the results from previous Section \ref{p1xp1}, to prove the main result of this section.

\begin{theorem}\label{betti numbers HFG in P2}
Let $X=HFG(P_M,Q_N)$  be a Hadamard fat grid  in $\mathbb P^2$. Then a graded minimal free
resolution of $I(X)$  is given by
\[
0 \longrightarrow \bigoplus_{(v_1,v_2) \in V_{X}} R(-v_1-v_2)
\longrightarrow \bigoplus_{(c_1,c_2) \in C_{X}} R(-c_1-c_2)
\longrightarrow I(X) \longrightarrow 0.\]
\end{theorem}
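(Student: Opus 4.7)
The plan is to deduce this single-graded resolution over $R=\mathbb{K}[x_0,x_1,x_2]$ from the bigraded resolution over $\mathbb{K}[\mathbb{P}^1\times\mathbb{P}^1]$ described in Theorem \ref{bettinumbersftpts}, using Theorem \ref{samebetti} as the bridge.

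First, I would associate to $X=HFG(P_M,Q_N)$ the set $Z\subset\mathbb{P}^1\times\mathbb{P}^1$ of fat points whose support is an $(r,s)$-grid and whose multiplicities are exactly $m_{ij}=m_i+n_j-1$. By Proposition \ref{acm} the set $Z$ is ACM, so Theorem \ref{bettinumbersftpts} yields the bigraded minimal free resolution
\[
0 \longrightarrow \bigoplus_{(v_1,v_2)\in V_Z} R(-v_1,-v_2) \longrightarrow \bigoplus_{(c_1,c_2)\in C_Z} R(-c_1,-c_2) \longrightarrow I(Z) \longrightarrow 0,
\]
where $C_Z$ and $V_Z$ are constructed from the tuple $\mathcal{A}_Z$ associated with the multiplicities $m_{ij}$.

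Next, I would invoke Theorem \ref{samebetti}, which tells us that $X$ and $Z$ share the same graded Betti numbers. Because the sets $C_X$ and $V_X$ in the statement of the present theorem are defined solely from the multiplicity data $m_{ij}$ (exactly the same data used to construct $C_Z$ and $V_Z$ from $\mathcal{A}_Z$), we obtain $C_X=C_Z$ and $V_X=V_Z$. To convert bidegree shifts $(c_1,c_2)$ into single-degree shifts $c_1+c_2$, I would follow the chain of proper hyperplane sections employed in Lemmas \ref{ZY} and \ref{same betti in p2}: each such section collapses the $\mathbb{N}^2$-grading on $\mathbb{K}[\mathbb{P}^1\times\mathbb{P}^1]$ to the standard grading on $R$ by sending the two degree components to their sum. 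Since hyperplane sections by non-zerodivisors preserve Betti numbers and total shifts, the resolution of $I(X)$ has exactly the stated form.

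The main obstacle is checking that the bigrading-to-grading collapse behaves as claimed; this amounts to ensuring that each hyperplane section along the chain $\mathbb{P}^1\times\mathbb{P}^1 \rightsquigarrow \mathbb{P}^3 \rightsquigarrow \mathbb{P}^2$ is by a regular element on the successive quotient ring, so that Betti numbers and total degrees transfer unchanged. This is guaranteed by the ACM hypothesis on $Z$, which propagates through each proper hyperplane section by the argument recalled in Remark \ref{FGM}. Once this bookkeeping is in place, the statement of the theorem is immediate: the ranks of the free modules are the cardinalities $|C_X|=|C_Z|$ and $|V_X|=|V_Z|$, and the shifts are the totals $c_1+c_2$ and $v_1+v_2$ of the corresponding bigraded shifts from Theorem \ref{bettinumbersftpts}.
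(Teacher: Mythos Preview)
Your proposal is correct and follows essentially the same route as the paper: construct $\mathcal{A}_X$, invoke Theorem~\ref{bettinumbersftpts} for the associated ACM scheme $Z\subset\mathbb{P}^1\times\mathbb{P}^1$, and transfer the Betti numbers to $X$ via Theorem~\ref{samebetti}. The paper's proof is in fact terser than yours---it simply cites those two theorems without spelling out the bigrading-to-grading collapse or the identification $C_X=C_Z$, $V_X=V_Z$---so your additional discussion of the proper hyperplane sections is a welcome clarification rather than a departure.
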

\begin{proof} We can construct $\mathcal A_X = (\alpha_1,\ldots,\alpha_m)$ associated to $X$ using the  method described as Section \ref{p1xp1}.  Applying Theorem \ref{bettinumbersftpts} and Theorem \ref{samebetti}, we get the conclusion.
\end{proof}

In the sequel we adopt the following notation:

\begin{notation} \label{notation}  Let $\mathcal{I}(P_M,Q_N)$ be the ideal of a Hadamard fat grid $HFG(P_M,Q_N)$ where $ M=\{m_1,\ldots,m_r\}$ and $ N=\{n_1,\ldots,n_s\}$ with $r\leq s$,  $m_i\leq m_{i+1}$, for $i=1, \dots, r-1$ and $ n_j\leq n_{j+1}$ for each $j=1, \dots, s-1$. 
Set $a_i=m_{r-i+1}+n_s-1$ and  $b_j=n_{s-j+1}-n_s$ for $i=1,\ldots, r$, $j=1,\ldots,s$. Let $H_i$ denotes the horizontal lines defining $\ell_Q\star P_{r-i+1}$, and $V_j$ denotes the vertical lines defining $\ell_P\star Q_{s-j+1}$. 
\end{notation}

From Corollary 3.4 in \cite{FGM} and Theorem \ref{betti numbers HFG in P2}, we have that 
\begin{corollary} \label{lineargens} If $X$ is a Hadamard fat grid $X=HFG(P_M,Q_N)$  in $\mathbb P^2$, then its homogeneous ideal is minimally generated by products of linear forms of type $H_i$ and $V_j$.
\end{corollary}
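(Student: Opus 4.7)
The approach is to transport the minimal generator structure from an auxiliary ACM scheme of fat points in $\mathbb{P}^1 \times \mathbb{P}^1$ back to the Hadamard fat grid in $\mathbb{P}^2$ via the hyperplane-section correspondence of Remark~\ref{FGM}. Theorem~\ref{samebetti} already provides an ACM set of fat points $Z \subset \mathbb{P}^1 \times \mathbb{P}^1$, supported on a complete intersection $(r,s)$-grid with the same multiplicities $m_{ij}$ as $X$ and sharing the graded Betti numbers of $X$.

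First I would invoke Corollary~3.4 of \cite{FGM}, which asserts that the homogeneous ideal of any ACM set of fat points in $\mathbb{P}^1 \times \mathbb{P}^1$ is minimally generated by products of linear forms drawn from the two rulings $\{x_{1,0}, x_{1,1}\}$ and $\{x_{2,0}, x_{2,1}\}$; each such linear factor cuts out either a row or a column of the supporting grid of $Z$. Next I would follow the chain of proper hyperplane sections described in Remark~\ref{FGM}: starting from $Z$ viewed as an ACM $1$-dimensional scheme in $\mathbb{P}^3$, a sequence of generic linear sections produces a set of fat points $Y \subset \mathbb{P}^2$ whose graded Betti numbers agree with those of $Z$ and hence of $X$. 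Because linear forms restrict to linear forms under hyperplane section, and because the row-rulings (respectively column-rulings) of $Z$ are carried to the horizontal lines $H_i = \ell_Q \star P_{r-i+1}$ (respectively the vertical lines $V_j = \ell_P \star Q_{s-j+1}$) supporting the rows and columns of $X$, each minimal generator of $I(Z)$ transports to a product of the form $\prod_i H_i^{\alpha_i} \prod_j V_j^{\beta_j}$ lying in $\mathcal{I}(P_M,Q_N)$. Combined with the exact degree data provided by Theorem~\ref{betti numbers HFG in P2}, this yields a minimal generating set of the required shape.

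The main obstacle is making precise that the hyperplane-section correspondence both preserves the ``product of linear forms'' structure \emph{and} correctly identifies the two rulings in $\mathbb{P}^1 \times \mathbb{P}^1$ with the two families $\{H_i\}$ and $\{V_j\}$ in $\mathbb{P}^2$. This ultimately rests on the canonical bijection between the rows (respectively columns) of $Z$ and those of $X$, established through their common multiplicity tuple $\mathcal{A}_X = \mathcal{A}_Z$, together with the observation that each linear factor appearing is uniquely determined up to scalar by the row or column it cuts out.
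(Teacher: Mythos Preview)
Your proposal is correct and follows essentially the same route as the paper: the paper's entire argument is the single sentence ``From Corollary 3.4 in \cite{FGM} and Theorem~\ref{betti numbers HFG in P2}'', and you have simply unpacked what that citation entails (the ACM comparison scheme $Z$ via Theorem~\ref{samebetti}, the product-of-linear-forms generators of $I(Z)$ from \cite{FGM}, and the transport through the hyperplane-section correspondence of Remark~\ref{FGM}). Your closing remark about the subtlety of matching the two rulings with the families $\{H_i\}$ and $\{V_j\}$ is a fair point that the paper leaves implicit as well.
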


\begin{theorem}\label{generators} A minimal set of generators of the ideal $ \mathcal{I}(P_M,Q_N) $ consists of $ m_r+n_s $ generators of types $ H_1^{a_1 -k}\cdots H_r^{a_r-k}\cdot V_1^{b_1+k}\cdots V_s^{b_s+k} $
for $k=0, \dots, m_r+n_s-1$ where we adopt the convention that $H_i^{a_i-k}=1$ if $a_i-k\leq 0$ and $V_j^{b_j+k}=1$ if $b_j+k\leq 0$.  That is,  a minimal set of generators is of type
\begin{equation}
\begin{aligned}
&H_1^{m_r+n_s-1}H_2^{m_{r-1}+n_s-1}\cdots H_r^{m_1+n_s-1}\cdot V_1^{0}V_{2}^{n_{s-1}-n_s} \cdots V_{s-1}^{n_{2}-n_s} V_s^{n_1-n_s},\\
&H_1^{m_r+n_s-2}H_2^{m_{r-1}+n_s-2}\cdots H_r^{m_1+n_s-2}\cdot V_1^{1}V_{2}^{n_{s-1}-n_s+1} \cdots V_{s-1}^{n_{2}-n_s+1} V_s^{n_1-n_s+1},\\
&\vdots\\
&H_1^{0}H_2^{m_{r-1}-m_r}\cdots H_r^{m_1-m_r}\cdot V_1^{m_r+n_s-1}\cdots V_{s-1}^{n_{2}+m_r-1} V_s^{n_1+m_r-1}.
\end{aligned}
\end{equation}

\end{theorem}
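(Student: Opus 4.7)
The plan is to combine Corollary \ref{lineargens} (a minimal generating set consists of products of the $H_i$ and $V_j$) with an elementary monomial calculation. First I would translate ideal membership into inequalities: a monomial $h = \prod_{i=1}^r H_i^{\alpha_i} \prod_{j=1}^s V_j^{\beta_j}$ vanishes to order exactly $\alpha_i + \beta_j$ at the grid point $H_i \cap V_j = P_{r-i+1}\star Q_{s-j+1}$, which carries prescribed multiplicity $m_{r-i+1} + n_{s-j+1} - 1 = a_i + b_j$ by Notation \ref{notation}. Hence $h \in \mathcal{I}(P_M, Q_N)$ if and only if
\[
\alpha_i + \beta_j \;\ge\; a_i + b_j \qquad \text{for all } 1 \le i \le r,\ 1 \le j \le s.
\]

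For each $k \in \{0, 1, \ldots, m_r + n_s - 1\}$ I would then set
\[
g_k \;:=\; \prod_{i=1}^r H_i^{(a_i - k)_+} \prod_{j=1}^s V_j^{(b_j + k)_+}.
\]
With $x = a_i - k$ and $y = b_j + k$, the trivial inequality $(x)_+ + (y)_+ \ge x + y = a_i + b_j$ shows $g_k \in \mathcal{I}(P_M, Q_N)$. Conversely, given any monomial $h = \prod H^{\alpha} \prod V^{\beta} \in \mathcal{I}(P_M, Q_N)$, I set $k^{\ast} := \max\bigl(0,\ \max_i (a_i - \alpha_i)\bigr)$: by construction $\alpha_i \ge (a_i - k^{\ast})_+$, while the membership condition rewritten as $\beta_j - b_j \ge a_i - \alpha_i$ (for every $i$) forces $\beta_j - b_j \ge k^{\ast}$, hence $\beta_j \ge (b_j + k^{\ast})_+$. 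Since $k^{\ast} \le \max_i a_i = a_1 = m_r + n_s - 1$, the index lies in the claimed range and $g_{k^{\ast}}$ divides $h$.

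Finally I would verify that the $g_k$ are pairwise incomparable under divisibility: passing from $g_k$ to $g_{k+1}$ strictly decreases the exponent of $H_1$ (since $a_1 > k$ for $0 \le k \le m_r + n_s - 2$) and strictly increases that of $V_1$ (since $b_1 = 0$ implies $(b_1 + k + 1)_+ = k + 1 > k = (b_1 + k)_+$), so neither of $g_k, g_{k'}$ divides the other when $k \neq k'$. Together with Corollary \ref{lineargens}, this identifies $\{g_0, \ldots, g_{m_r + n_s - 1}\}$ as the minimal generating set. The main technical hurdle I anticipate is keeping the reversed indexing of Notation \ref{notation} (with $a_i = m_{r-i+1} + n_s - 1$ and $b_j = n_{s-j+1} - n_s$) consistent throughout, so that the cutoffs $(a_i - k)_+ = 0$ and $(b_j + k)_+ = 0$ align with the assumed monotonicity of the $m_i$'s and $n_j$'s and no spurious generators appear outside the stated range.
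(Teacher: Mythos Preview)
Your argument is correct and takes a more explicit, combinatorial route than the paper. The paper's proof is a single sentence citing Theorem \ref{bettinumbersftpts}, Theorem \ref{samebetti}, Theorem \ref{betti numbers HFG in P2}, and Corollary \ref{lineargens}: it reads off the shape of the generators from the bigraded resolution of the associated ACM scheme in $\PP^1\times\PP^1$ and then transfers this to $\PP^2$ via the hyperplane-section machinery of Remark \ref{FGM}. You instead use Corollary \ref{lineargens} only to reduce to products of the $H_i$ and $V_j$, and then work directly: the local computation $\operatorname{ord}_{H_i\cap V_j}\bigl(\prod H^{\alpha}\prod V^{\beta}\bigr)=\alpha_i+\beta_j$ (valid because the grid hypothesis in Definition \ref{defHFG} forces each grid point to lie on exactly one $H_i$ and one $V_j$) turns ideal membership into the system $\alpha_i+\beta_j\ge a_i+b_j$, whose divisibility-minimal nonnegative solutions you identify as the $g_k$. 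This buys you an explicit description of the generators without ever writing down $\mathcal{A}_Z$, $C_Z$, $V_Z$; conversely, the paper's approach packages the count and the degrees into the already-known resolution.

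One point deserves care. Pairwise incomparability of the $g_k$ under monomial divisibility shows they minimally generate the monomial ideal $J\subset T=\mathbb K[a_{1,1},\dots,a_{2,s}]$ of Remark \ref{FGM}, but it does \emph{not} by itself prevent some $g_k$ from lying in the $\mathbb K[x_0,x_1,x_2]$-ideal generated by the remaining $g_{k'}$: the $r+s$ linear forms $H_i,V_j$ are linearly dependent once $r+s>3$, so syzygies invisible at the monomial level can appear. What closes this gap is precisely the Betti-number transfer underlying Corollary \ref{lineargens} (equivalently Theorem \ref{samebetti}): $\mathcal I(P_M,Q_N)$ and $J$ have the same number of minimal generators, and your incomparability argument pins that number at $m_r+n_s$. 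Since you also show the $g_k$ generate, they must be minimal. It would strengthen the write-up to make this dependence explicit rather than folding it into the phrase ``together with Corollary \ref{lineargens}''.
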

\begin{proof} We apply Theorem \ref{bettinumbersftpts}, Theorem \ref{samebetti},  Theorem \ref{betti numbers HFG in P2} and Corollary \ref{lineargens} .
\end{proof}

We will show the above results with an example.

\begin{example}\label{ExCase}  Let $X=HFG(P_M,Q_N)$ be a Hadamard fat grid where $M=(2,3,3)$ and $N=(2,3,4,4)$ as in Figure \ref{example}.

\begin{figure}

\centering

\begin{tikzpicture}
   [scale=0.55,auto=center]
   
   
\draw[dotted] (-1,7) -- (-0.5,7);
\draw(-0.5,7) -- (9,7);
\draw[dotted] (9,7) -- (9.5,7);

  \node[circle, draw=black!100,fill=black!100,inner sep=2pt]  at (0,7) {};
   \node[circle, draw=black!100,fill=black!100,inner sep=2pt]  at (2,7) {};
   \node[circle, draw=black!100,fill=black!100,inner sep=2pt]  at (4,7) {};
   \node[circle, draw=black!100,fill=black!100,inner sep=2pt]  at (6,7) {};

   \draw(0,7.5) node[scale=0.7] {$2$};  
   \draw(2,7.5) node[scale=0.7] {$3$};  
   \draw(4,7.5) node[scale=0.7] {$4$};  
   \draw(6,7.5) node[scale=0.7] {$4$};


\draw (-2,-2) -- (-2,5.5);

    \node[circle, draw=black!100,fill=black!100,inner sep=2pt]  at (-2,5) {};
    \node[circle, draw=black!100,fill=black!100,inner sep=2pt]  at (-2,3) {};
    \node[circle, draw=black!100,fill=black!100,inner sep=2pt]  at (-2,1) {};

   \draw(-2.5,5.05) node[scale=0.7] {$2$};  
   \draw(-2.5,3.05) node[scale=0.7] {$3$};  
   \draw(-2.5,1.05) node[scale=0.7] {$3$};


\draw(-0.5,5) -- (9,5);
\draw(-0.5,3) -- (9,3);
\draw[dotted] (9,3) -- (9.5,3);
\draw(-0.5,1) -- (9,1);

\draw (0,-2) -- (0,5.5);
\draw (2,-2) -- (2,5.5);
\draw (4,-2) -- (4,5.5);
\draw (6,-2) -- (6,5.5);

  \node[circle, draw=black!100,fill=black!100,inner sep=2pt]  at (0,5) {};
   \node[circle, draw=black!100,fill=black!100,inner sep=2pt]  at (2,5) {};
   \node[circle, draw=black!100,fill=black!100,inner sep=2pt]  at (4,5) {};
   \node[circle, draw=black!100,fill=black!100,inner sep=2pt]  at (6,5) {};
  \node[circle, draw=black!100,fill=black!100,inner sep=2pt]  at (0,3) {};
   \node[circle, draw=black!100,fill=black!100,inner sep=2pt] (n1) at (2,3) {};
   \node[circle, draw=black!100,fill=black!100,inner sep=2pt] (n1) at (4,3) {};
   \node[circle, draw=black!100,fill=black!100,inner sep=2pt] (n1) at (6,3) {};

  \node[circle, draw=black!100,fill=black!100,inner sep=2pt] (n1) at (0,1) {};
   \node[circle, draw=black!100,fill=black!100,inner sep=2pt] (n1) at (2,1) {};
   \node[circle, draw=black!100,fill=black!100,inner sep=2pt] (n1) at (4,1) {};
   \node[circle, draw=black!100,fill=black!100,inner sep=2pt] (n1) at (6,1) {};

\draw(0.7,5.3) node[scale=0.5] {$3$};  
\draw(2.7,5.3) node[scale=0.5] {$4$};  
\draw(4.7,5.3) node[scale=0.5] {$5$};  
\draw(6.7,5.3) node[scale=0.5] {$5$};

\draw(0.7,3.3) node[scale=0.5] {$4$};  
\draw(2.7,3.3) node[scale=0.5] {$5$};  
\draw(4.7,3.3) node[scale=0.5] {$6$}; 
\draw(6.7,3.3) node[scale=0.5] {$6$};

\draw(0.7,1.3) node[scale=0.5] {$4$};  
\draw(2.7,1.3) node[scale=0.5] {$5$};  
\draw(4.7,1.3) node[scale=0.5] {$6$}; 
\draw(6.7,1.3) node[scale=0.5] {$6$};

\end{tikzpicture}
\caption{$X=HFG(P_M,Q_N)$, with $M=(2,3,3)$ and $N=(2,3,4,4)$.}
\label{example}
\end{figure}
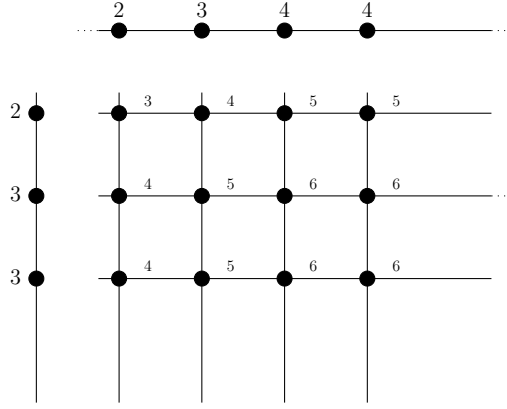

In this case,
$$\mathcal A_X=(21,21,17,17,17,13,13,13,9,9,9,5,5,5,2,2,2),$$
$$V_X=\{(2,21),(5,17),(8,13),(11,9),(14,5), (17,2)\}$$ and $$C_X=\{(0,21),(2,17),(5,13),(8,9),(11,5), (14,2), (17,0)\} .$$ Then, using Theorem \ref{betti numbers HFG in P2}, a graded minimal free
resolution of $I(X)$  is given by
\[
0 \longrightarrow   R(-23)\oplus R(-22)\oplus R(-21)\oplus R(-20)\oplus R^2(-19)
\longrightarrow \]
\[ R(-21)\oplus R(-19)\oplus R(-18)\oplus R^2(-17)\oplus R^2(-16)
\longrightarrow I(X) \longrightarrow 0\]

And using Theorem \ref{generators}, a minimal set of seven generators is given by
\begin{equation}
\begin{aligned}
&H_1^{6}H_2^{6} H_3^{5}\\
&H_1^5 H_2^5 H_3^4 \cdot V_1^{1}V_2^{1}\\
&H_1^4 H_2^4 H_3^3 \cdot V_1^{2}V_2^{2}V_3^{1}\\
&H_1^3 H_2^3 H_3^2 \cdot V_1^{3}V_2^{3}V_3^{2} V_{4}^{1}\\
&H_1^2H_2^2H_3^{1}\cdot V_1^{4}V_2^{4}V_3^{3} V_{4}^{2}\\
&H_1^{1}H_2^{1}\cdot V_1^{5}V_2^{5}V_3^{4} V_{4}^{3}\\
&V_1^{6}V_2^{6}V_3^{5} V_{4}^{4}.\\
\end{aligned}
\end{equation}

\end{example}

We are able to compute the Waldschmidt constant and the resurgence of a Hadamard fat grid  $ \mathcal{I}(P_M,Q_N)$. We recall the following
\begin{definition} For a given homogeneous ideal $0\ne I\subsetneq \mathbb{K}\left[x_{0}, \ldots, x_{N}\right]$, its  Waldschmidt constant $\hat{\alpha}(I)$ is defined as 
$$\hat{\alpha}(I)=\hbox{lim}_{m\to \infty}\alpha(I^{(m)})/m$$
\noindent where $\alpha(I)$ is the least degree $d$ such that $I_d\neq (0)$.
\end{definition}
Because of the subadditivity of $\alpha$,
this limit exists (see Lemma 2.3.1 of \cite{BC}).
Moreover, $\hat{\alpha}(I)>0$ (see Lemma 2.3.2 of \cite{BC}).

We need some preliminary result. 

\begin{proposition}\label{alphabeta}
	Let $X=HFG(P_M,Q_N)$ be a Hadamard fat grid. The minimal degree $ \alpha(\mathcal{I}(P_M,Q_N))$ of a generator of the ideal $\mathcal{I}(P_M,Q_N)$ is given by
	$$ \alpha(\mathcal{I}(P_M,Q_N)) =\sum_{i=1}^{r} m_{i}+\sum_{i=1}^{r} n_{s-i+1} -r.$$
\end{proposition}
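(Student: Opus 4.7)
The plan is to reduce the computation of $\alpha(\mathcal{I}(P_M,Q_N))$ to minimizing the degrees of the $m_r+n_s$ generators listed in Theorem \ref{generators}. Since $\alpha(I)$ equals the smallest degree of any minimal generator when $I$ is described by a minimal generating set, I need only find the smallest exponent-sum among these generators. For the $k$-th generator ($k = 0, \ldots, m_r+n_s-1$), since $H_i$ and $V_j$ are linear forms, its degree is
\[
d_k \;=\; \sum_{i=1}^{r}(a_i-k)_+ \;+\; \sum_{j=1}^{s}(b_j+k)_+,
\]
where $(x)_+=\max\{x,0\}$, $a_i=m_{r-i+1}+n_s-1\ge 0$ and $b_j=n_{s-j+1}-n_s\le 0$ (with $b_1=0$).

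The first observation is that $d_k$ is convex in $k$. Writing the forward difference as $d_{k+1}-d_k = B(k)-A(k)$ with $A(k)=\#\{i:a_i-k\ge 1\}$ non-increasing in $k$ and $B(k)=\#\{j:b_j+k\ge 0\}$ non-decreasing in $k$, the differences are monotone non-decreasing, so $d_k$ is convex. Hence a minimum is reached at the first $k^\ast$ for which $B(k^\ast) \ge A(k^\ast)$. The natural candidate, motivated by the shape of the claimed answer, is $k^\ast := n_s - n_{s-r+1}$, which is a legitimate index since $s\ge r$ implies $k^\ast\in\{0,\dots,m_r+n_s-1\}$. At this value, the inequality $b_j+k^\ast\ge 0$ reads $n_{s-j+1}\ge n_{s-r+1}$, satisfied precisely for $j=1,\dots,r$; and $a_i-k^\ast=m_{r-i+1}+n_{s-r+1}-1\ge 1$ for every $i$, so all $r$ horizontal exponents remain positive. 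Hence $A(k^\ast)=r\le B(k^\ast)$, while a symmetric check at $k^\ast-1$ gives $B(k^\ast-1)\le r\le A(k^\ast-1)$, confirming that $k^\ast$ minimizes $d_k$ by convexity.

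Finally, I would evaluate $d_{k^\ast}$ directly:
\[
d_{k^\ast} \;=\; \sum_{i=1}^{r}\bigl(m_i+n_{s-r+1}-1\bigr) \;+\; \sum_{j=1}^{r}\bigl(n_{s-j+1}-n_{s-r+1}\bigr) \;=\; \sum_{i=1}^{r}m_i + \sum_{i=1}^{r}n_{s-i+1} - r,
\]
after the $r n_{s-r+1}$ terms cancel. This matches the claimed value. The only real obstacle is keeping the reversed indices $a_i$, $b_j$ straight against the original $m_i$, $n_j$; once the convexity of $d_k$ is noted, the guess $k^\ast=n_s-n_{s-r+1}$ is essentially forced by the fact that we want all $m_i$ to survive while exactly the top $r$ of the $n_j$ contribute.
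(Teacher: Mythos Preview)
Your proof is correct and in fact cleaner than the paper's own argument. Both proofs start from Theorem~\ref{generators}, identify the same critical index $k^\ast = n_s - n_{s-r+1}$, and evaluate the degree there to obtain the claimed formula. The difference lies in how minimality is verified: the paper sets $\alpha$ equal to the claimed value and then runs a chain of algebraic equivalences to show $g_t \ge \alpha$ for $1 \le t \le n_s - n_{s-r+1}$ (and does not explicitly treat the range $t > n_s - n_{s-r+1}$), whereas you observe that $d_k = \sum_i (a_i-k)_+ + \sum_j (b_j+k)_+$ is a discrete convex function and locate the minimum by the sign change of its forward difference $B(k)-A(k)$. Your convexity argument handles both sides of $k^\ast$ at once and makes the role of the counts $A(k)$, $B(k)$ transparent; the paper's approach is more computational but reaches the same endpoint. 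One small point worth stating explicitly in your write-up is the boundary case $k^\ast = 0$ (i.e.\ $n_{s-r+1} = n_s$), where there is no $k^\ast - 1$ to check; your computation $A(0)=r$ and $B(0)\ge r$ already gives $d_1 - d_0 \ge 0$, so the minimum is at the left endpoint and nothing further is needed.
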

\begin{proof}

	By Theorem \ref{generators}, we know that the degrees of the minimal generators of the ideal $ \mathcal{I}(P_M,Q_N) $ are of type
	$$ \left\lbrace \sum_{i=1}^{r} \deg H_{r-i+1}^{m_i+n_s-t}  + \sum_{j=1}^{s}\deg V_{s-j+1}^{n_j-n_s+t-1}\right\rbrace _{t= 1}^{m_r+n_s },$$
	where $H_i^{a_i-k}=1$ if $a_i-k\leq 0$ and $V_j^{b_j+k}=1$ if $b_j+k\leq 0$ for $k=0, \dots, m_r+n_s-1$ . It means that we only consider non-negative degrees.

	Since $ m_1 \leq m_2 \leq \cdots \leq m_r  $, therefore $$\deg H_{r}^{m_1+n_s-t} \leq \deg H_{r-1}^{m_2+n_s-t} \leq \cdots \leq \deg H_{1}^{m_r+n_s-t}. $$
	On the other hand, $\deg H_{r}^{m_1+n_s-t} > 0$ for $ t = 1,\ldots, m_1+n_s-1 $. It follows that  
	\begin{equation}\label{sumh}
		0<\deg H_{r}^{m_1+n_s-t} \leq \deg H_{r-1}^{m_2+n_s-t} \leq \cdots \leq \deg H_{1}^{m_r+n_s-t} \quad \forall\  t = 1,\ldots, m_1+n_s-1 .
	\end{equation}
	We can have a similar argument for the other summation as follows.
	
	Since $ n_s \geq n_{s-1} \geq \cdots \geq n_{s-r+1}\geq \cdots \geq n_1 $,  therefore $$\deg V_{s}^{n_1-n_s+t-1} \leq\cdots \leq \deg V_{r}^{n_{s-r+1}-n_s+t-1} \leq \cdots\leq \deg V_{2}^{n_{s-1}-n_s+t-1} \leq \deg V_{1}^{n_s-n_s+t-1}. $$
	One can also observe that $ \deg V_{1}^{n_s-n_s+t-1} = 0 $ for $ t = 1 $. Hence for $ t = 1 $ we have,
	$$\deg V_{s}^{n_1-n_s} \leq\cdots \leq \deg V_{r}^{n_{s-r+1}-n_s} \leq \cdots\leq \deg V_{2}^{n_{s-1}-n_s} \leq \deg V_{1}^{n_s-n_s} = 0. $$

	For $ t = 1,\ldots, m_r+n_s $, let $ G = \{g_1,g_2,\ldots,g_t\} $  be a set of minimal generators of $ \mathcal{I}(P_M,Q_N) $.  We claim that $ g_{t} \geq   g_{n_s - n_{s-r+1}} $ for $ 1 \leq t \leq n_s - n_{s-r+1}  $.

	 Set $	\alpha= \sum_{i=1}^{r}m_i + (\sum_{i=1}^{r}n_{s-i+1}) - r$.

We observe that 
 \begin{align*}
	 	 \alpha&=\sum_{i=1}^{r}(m_i +n_s - n_s + n_{s-r+1}) + \sum_{j=1}^{s}(n_j-n_s+n_s-n_{s-r+1}-1)\\
			& = \sum_{i=1}^{r}(m_i +n_{s-r+1} ) + \sum_{j=1}^{s} (n_j - n_{s-r+1} -1)\\
			& = \sum_{i=1}^{r}m_i + rn_{s-r+1}\\
			& + (n_1- n_{s-r+1}-1) +(n_2- n_{s-r+1}-1)+\cdots\\
			& + (n_{s-r}- n_{s-r+1}-1) + (n_{s-r+1}- n_{s-r+1}-1)\\
			& + (n_{s-r+2}- n_{s-r+1}-1) + \cdots + (n_{s}- n_{s-r+1}-1).
	 \end{align*}
	 Since 
\begin{equation}\label{negative exponents}
 n_s \geq n_{s-1} \geq \cdots \geq n_{s-r+1} \geq n_{s-r} \geq\cdots \geq n_{1} ,
\end{equation}
\noindent we have
	\begin{align*}
	 \alpha &= \sum_{i=1}^{r}m_i + rn_{s-r+1} \\
	 & + (n_{s-r+1}- n_{s-r+1}-1) + (n_{s-r+2}- n_{s-r+1}-1) + \cdots + (n_{s}- n_{s-r+1}-1)\\
	 & = \sum_{i=1}^{r}m_i + rn_{s-r+1} + \sum_{i=1}^r n_{s-i+1} - r n_{s-r+1} -r\\
	 & = \sum_{i=1}^{r}m_i + \sum_{i=1}^r n_{s-i+1} -r.
	\end{align*}
	
We also note that from (\ref{negative exponents}), we have
\begin{equation}\label{negative sum}
(\sum_{i=r+1}^s n_{s-i+1} - n_s ) \leq 0.
\end{equation}
Then, for $ 1\leq t \leq n_s - n_{s-r+1} $ and using (\ref{negative sum}), we have
	\begin{align*}
g_t\geq &\alpha\\
	\iff & \sum_{i=1}^r\left(m_i+n_s-t\right)+\sum_{j=1}^s\left(n_j-n_s+ t-1\right) \geq \sum_{i=1}^r m_i+(\sum_{i=1}^r n_{s-i+1})-r  \\
	\iff& (\sum_{j=1}^s n_j) + r n_s-rt- sn_s+s(t-1) \geq (\sum_{i=1}^r n_{s-i+1})-r\\
	\iff& (\sum_{i=1}^r n_{s-i+1}) + (\sum_{i=r+1}^s n_{s-i+1})+ r n_s-rt- sn_s+s(t-1) \geq (\sum_{i=1}^r n_{s-i+1})-r\\
	\iff& (\sum_{i=r+1}^s n_{s-i+1})- (s-r)n_s+ s(t-1) +r -rt \geq 0\\
	\iff& (\sum_{i=r+1}^s n_{s-i+1} - n_s )+ s(t-1) +r -rt \geq 0  \\
	\iff& t(s-r) +r -s \geq 0 \\
\iff & t \geq 1 .
	\end{align*}
This proves that $\alpha$ is the minimum value among all the degrees of the minimal generators of a Hadamard fat grid.

\end{proof}

\begin{example}
Using again the values of Example \ref{ExCase} we get 
\[
 \alpha(\mathcal{I}(P_M,Q_N)) =2+3+3+4+4+3-3=16
 \]
 as expected from its minimal resolution. In this case we have two generators of minimal degree, namely
$H_1^5 H_2^5 H_3^4 \cdot V_1^{1}V_2^{1}$ and $H_1^4 H_2^4 H_3^3 \cdot V_1^{2}V_2^{2}V_3^{1}$.
\end{example}

\begin{remark}
 Note that since $ m_1 \leq m_2 \leq \cdots \leq m_r  $ and  $ n_1 \leq n_2 \leq \cdots \leq n_s  $ therefore the maximum degree of generators of the ideal $ \mathcal{I}(P_M,Q_N) $ is as the following: $$\beta(\mathcal{I}(P_M,Q_N))=\max \left\lbrace \sum_{j=1}^s (m_r+n_j-1),\sum_{i=1}^r (n_s+m_i-1)\right\rbrace .$$
\end{remark}

We need  a preliminary lemma to study the symbolic powers $\mathcal{I}(P_M,Q_N)^{(t)}$. 

\begin{lemma}\label{fathfg}
The $ t $-th symbolic power of $\mathcal{I}(P_M,Q_N)$ is a Hadamard fat grid.
\end{lemma}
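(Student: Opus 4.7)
The plan is to realize the symbolic power as another Hadamard fat grid on the \emph{same} underlying collinear sets $\{P_i\}$ and $\{Q_j\}$, only with modified multiplicities. Since $\mathcal{I}(P_M,Q_N)$ is the ideal of a fat point scheme with primary decomposition
$$\mathcal{I}(P_M,Q_N) \;=\; \bigcap_{i,j} I(P_i\star Q_j)^{m_i+n_j-1},$$
the definition of symbolic power for fat point ideals recalled in Section~2 gives immediately
$$\mathcal{I}(P_M,Q_N)^{(t)} \;=\; \bigcap_{i,j} I(P_i\star Q_j)^{t(m_i+n_j-1)}.$$
So the whole problem reduces to writing each exponent $t(m_i+n_j-1)$ as $m'_i+n'_j-1$ for some positive integers $m'_i,n'_j$ depending only on $i$ or $j$ respectively.

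For this I would take the asymmetric split $m'_i := tm_i-(t-1)$ and $n'_j := tn_j$. A direct check gives $m'_i+n'_j-1 = t(m_i+n_j-1)$, and positivity is automatic: $m_i\geq 1$ forces $m'_i\geq 1$, while $n'_j\geq t\geq 1$. The ``no-collision'' hypothesis $P_i\star Q_j\neq P_k\star Q_l$ is a statement about the points themselves, hence is inherited from the original grid; so $HFG(P_{M'},Q_{N'})$ with $M'=\{m'_i\}$, $N'=\{n'_j\}$ is a bona fide Hadamard fat grid.

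With the splitting in hand, I would assemble the identity using the two main tools already established. Corollary \ref{coldechad} distributes the Hadamard product over the intersections defining $I(P_{M'})$ and $I(Q_{N'})$, yielding
$$I(P_{M'}) \star I(Q_{N'}) \;=\; \bigcap_{i,j}\bigl(I(P_i)^{m'_i} \star I(Q_j)^{n'_j}\bigr).$$
Since the points $P_i,Q_j$ lie off $\Delta_1$, Theorem \ref{thmtwopoints} applies to every factor, giving $I(P_i)^{m'_i}\star I(Q_j)^{n'_j} = I(P_i\star Q_j)^{m'_i+n'_j-1} = I(P_i\star Q_j)^{t(m_i+n_j-1)}$. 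Intersecting and comparing with the displayed formula for $\mathcal{I}(P_M,Q_N)^{(t)}$ above gives $\mathcal{I}(P_M,Q_N)^{(t)} = \mathcal{I}(P_{M'},Q_{N'})$, the defining ideal of $HFG(P_{M'},Q_{N'})$.

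The only genuine obstacle is to guess the correct splitting of $t(m_i+n_j-1)$ between rows and columns so that the resulting integers are positive and meet the definition of a Hadamard fat grid; once that elementary arithmetic step is done, the rest of the argument is just a clean application of Theorem \ref{thmtwopoints} and Corollary \ref{coldechad}.
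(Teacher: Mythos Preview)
Your proposal is correct and follows essentially the same approach as the paper: both compute $\mathcal{I}(P_M,Q_N)^{(t)}=\bigcap_{i,j} I(P_i\star Q_j)^{t(m_i+n_j-1)}$ and use the identical asymmetric split $m'_i=tm_i-(t-1)$, $n'_j=tn_j$ to recognize the result as $HFG(P_{M'},Q_{N'})$. Your version is in fact a bit more careful than the paper's, since you explicitly verify positivity of the new multiplicities, note that the no-collision hypothesis is inherited, and invoke Corollary~\ref{coldechad} and Theorem~\ref{thmtwopoints} to justify the identification; the paper simply asserts the conclusion after the exponent rewrite.
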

\begin{proof}
	We know that:
$$
\mathcal{I}(P_M,Q_N)= \bigcap_{i=1}^r\bigcap_{j=1}^s I(P_i\st Q_j)^{m_i+n_j-1}.
$$
Hence
\begin{flalign*}
	\mathcal{I}(P_M,Q_N)^{(t)}&= \bigcap_{i=1}^r\bigcap_{j=1}^s I(P_i\st Q_j)^{t(m_i+n_j-1)}\\
	&= \bigcap_{i=1}^r\bigcap_{j=1}^s I(P_i\st Q_j)^{((tm_i-(t-1))+tn_j)-1}.
\end{flalign*}
Therefore, the sets $P_{M'} \st Q_{N'} $ with multiplicities $M'=\{tm_1-(t-1),\ldots,tm_r-(t-1)\}$ and $ N'=\{tn_1,\ldots,tn_s\}$ is the Hadamard fat grid whose ideal is $\mathcal{I}(P_M,Q_N)^{(t)}$.

\end{proof}

We are now able to show  the following

\begin{proposition}\label{WC}
The Waldschmidt constant $\hat{\alpha}(\mathcal{I}(P_M,Q_N))$ of the Hadamard fat grid $HFG(P_M,Q_N)$ is equal to the least degree of a minimal set of generators of its defining ideal, i.e. $
 \hat{\alpha}(\mathcal{I}(P_M,Q_N))= \alpha(\mathcal{I}(P_M,Q_N))$.
\end{proposition}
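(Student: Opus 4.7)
The plan is to leverage the two previously established results about Hadamard fat grids: the fact that symbolic powers of $\mathcal{I}(P_M,Q_N)$ are again defining ideals of Hadamard fat grids (Lemma \ref{fathfg}), and the explicit formula for $\alpha$ of such an ideal (Proposition \ref{alphabeta}). With those two results in hand, the Waldschmidt limit collapses to the value $\alpha(\mathcal{I}(P_M,Q_N))$ without needing any estimation argument, because we will be able to show that the equality $\alpha(\mathcal{I}(P_M,Q_N)^{(t)}) = t\cdot \alpha(\mathcal{I}(P_M,Q_N))$ holds exactly for every $t \geq 1$.

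First, I would apply Lemma \ref{fathfg} to write $\mathcal{I}(P_M,Q_N)^{(t)}$ as the defining ideal of the Hadamard fat grid $HFG(P_{M'},Q_{N'})$ with multiplicities $M' = \{tm_1-(t-1),\dots,tm_r-(t-1)\}$ and $N' = \{tn_1,\dots,tn_s\}$. Then I would verify that the ordering hypothesis needed for Proposition \ref{alphabeta} is preserved: since $m_i \leq m_{i+1}$ implies $tm_i-(t-1)\leq tm_{i+1}-(t-1)$, and $n_j\leq n_{j+1}$ implies $tn_j \leq tn_{j+1}$, the tuples $M'$ and $N'$ are still non-decreasing. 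The entries also remain positive since $tm_i - (t-1) = t(m_i-1)+1 \geq 1$ whenever $m_i \geq 1$.

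Next, I would apply Proposition \ref{alphabeta} directly to $HFG(P_{M'},Q_{N'})$ to compute
\[
\alpha(\mathcal{I}(P_M,Q_N)^{(t)}) = \sum_{i=1}^r (tm_i-(t-1)) + \sum_{i=1}^r tn_{s-i+1} - r.
\]
A short rearrangement gives
\[
\alpha(\mathcal{I}(P_M,Q_N)^{(t)}) = t\!\left(\sum_{i=1}^r m_i + \sum_{i=1}^r n_{s-i+1} - r\right) = t\cdot \alpha(\mathcal{I}(P_M,Q_N)),
\]
where the last equality uses Proposition \ref{alphabeta} for the original grid.

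Finally, plugging this into the definition of the Waldschmidt constant yields
\[
\hat{\alpha}(\mathcal{I}(P_M,Q_N)) = \lim_{t\to\infty}\frac{\alpha(\mathcal{I}(P_M,Q_N)^{(t)})}{t} = \alpha(\mathcal{I}(P_M,Q_N)),
\]
which is the desired conclusion. There is no real obstacle here once Lemma \ref{fathfg} and Proposition \ref{alphabeta} are in place; the only thing to double-check is that the hypotheses on ordering and positivity of the new multiplicity vectors $M'$, $N'$ are inherited from those on $M$, $N$, and the arithmetic $r(t-1)+r = rt$ that produces the clean factor of $t$.
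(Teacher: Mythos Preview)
Your proposal is correct and follows essentially the same approach as the paper: apply Lemma \ref{fathfg} to identify $\mathcal{I}(P_M,Q_N)^{(t)}$ as the ideal of $HFG(P_{M'},Q_{N'})$, use Proposition \ref{alphabeta} on this new grid, simplify to obtain $\alpha(\mathcal{I}(P_M,Q_N)^{(t)})=t\,\alpha(\mathcal{I}(P_M,Q_N))$, and pass to the limit. Your explicit verification that $M'$ and $N'$ remain non-decreasing and positive is a small addition the paper leaves implicit, but the argument is otherwise identical.
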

\begin{proof}
Since the $t$-symbolic power of the ideal of a Hadamard fat grid $HFG(P_M,Q_N)$ defines a new Hadamard fat grid $HFG(P_{M'},Q_{N'})$, one has
\[
\alpha(\mathcal{I}(P_M,Q_N)^{(t)})=\alpha(\mathcal{I}(P_{M'},Q_{N'}))
\]
where $M'$ and $N'$ are defined as in Lemma \ref{fathfg}.
Hence we can apply Proposition \ref{alphabeta} to compute  $\alpha(\mathcal{I}(P_{M'},Q_{N'}))$ obtaining

\begin{flalign*}
\alpha(\mathcal{I}(P_M,Q_N))^{(t)}) &=\sum_{i=1}^{r} \left( tm_{i}-(t-1)\right) +t \sum_{i=1}^{r} n_{s-i+1} -r\\
                           &= t\sum_{i=1}^{r} m_{i} -r(t-1)+t\sum_{i=1}^{r} n_{s-i+1}-r\\
                           &= t\sum_{i=1}^{r} m_{i} +t\sum_{i=1}^{r} n_{s-i+1}-rt\\
                           &= t\left( \sum_{i=1}^{r} m_{i} +\sum_{i=1}^{r} n_{s-i+1}-r\right) \\
                           & = t  \alpha(\mathcal{I}(P_M,Q_N)). 
\end{flalign*}
Therefore,
$$ \hat{\alpha}(\mathcal{I}(P_M,Q_N))=\lim_{t\to\infty}\dfrac{\alpha(\mathcal{I}(P_M,Q_N)^{(t)})}{t}=\alpha(\mathcal{I}(P_M,Q_N)) .$$
\end{proof}

We pass now to compute the resurgence $\rho(I)=\hbox{sup}\{m/r : I^{(m)}\not\subseteq I^r\}$ of $I=\mathcal{I}(P_M,Q_N)$. 

It is important to notice that even though $\mathcal{I}(P_M,Q_N)$ defines a set of fat points in $\mathbb P^2$ that  is not a complete intersection, we will show that $\mathcal{I}(P_M,Q_N)^t=\mathcal{I}(P_M,Q_N)^{(t)}$ and hence that its  resurgence is equal to 1. 

\begin{proposition}\label{equality symb and reg}
Let $\mathcal{I}(P_M,Q_N)$ be the ideal of a Hadamard fat grid, then $\mathcal{I}(P_M,Q_N)^t=\mathcal{I}(P_M,Q_N)^{(t)}$ for all $t\geq 1$.
\end{proposition}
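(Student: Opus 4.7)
The plan is to prove the two containments separately. The containment $\mathcal{I}(P_M,Q_N)^t \subseteq \mathcal{I}(P_M,Q_N)^{(t)}$ is the standard inclusion of ordinary powers into symbolic powers recalled in the introduction (see \cite[Lemma 8.1.4]{PSC}). For the reverse containment, my idea is to exploit the very explicit generator description from Theorem \ref{generators}, combined with Lemma \ref{fathfg}, which identifies $\mathcal{I}(P_M,Q_N)^{(t)}$ itself as the ideal of a Hadamard fat grid $HFG(P_{M'},Q_{N'})$ with multiplicities $M'=\{tm_i-(t-1)\}_i$ and $N'=\{tn_j\}_j$. A direct substitution in Notation \ref{notation} shows that the parameters attached to this new grid are simply $a_i'=ta_i$ and $b_j'=tb_j$. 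Writing
\[
g_k = H_1^{(a_1-k)_+}\cdots H_r^{(a_r-k)_+}\, V_1^{(b_1+k)_+}\cdots V_s^{(b_s+k)_+},\qquad 0\le k\le m_r+n_s-1,
\]
for the minimal generators of $\mathcal{I}(P_M,Q_N)$ and
\[
F_k = H_1^{(ta_1-k)_+}\cdots H_r^{(ta_r-k)_+}\, V_1^{(tb_1+k)_+}\cdots V_s^{(tb_s+k)_+},\qquad 0\le k\le t(m_r+n_s-1),
\]
for the minimal generators of $\mathcal{I}(P_M,Q_N)^{(t)}$, it suffices to show every $F_k$ lies in $\mathcal{I}(P_M,Q_N)^t$.

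The key combinatorial step is to decompose $k=qt+r$ with $0\le r<t$ and to claim that
\[
F_k \;=\; g_{q+1}^{\,r}\cdot g_q^{\,t-r},
\]
with the convention that the factor $g_{q+1}^{\,r}$ disappears when $r=0$. Since the right-hand side is visibly a product of $t$ generators of $\mathcal{I}(P_M,Q_N)$, it belongs to $\mathcal{I}(P_M,Q_N)^t$, and the identity therefore yields $F_k \in \mathcal{I}(P_M,Q_N)^t$ for every admissible $k$. To verify the identity one compares the exponents of each $H_i$ and each $V_j$ separately. For $H_i$: if $q+1\le a_i$, then $(a_i-q)_+$ and $(a_i-q-1)_+$ are both positive, the right-hand $H_i$-exponent equals $r(a_i-q-1)+(t-r)(a_i-q)=ta_i-k$, and the inequality $ta_i\ge t(q+1)>k$ makes this coincide with $(ta_i-k)_+$; if $q\ge a_i$, both contributions on the right vanish and $ta_i\le tq\le k$ forces $(ta_i-k)_+=0$ as well. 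The $V_j$ analysis is symmetric, using $b_j\le 0$: if $q\ge -b_j$ the sum equals $tb_j+k\ge 0=(tb_j+k)_+$, while if $q\le -b_j-1$ both summands vanish and $tb_j+k<0$, so again both sides are $0$.

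Together these case checks establish $\mathcal{I}(P_M,Q_N)^{(t)}\subseteq \mathcal{I}(P_M,Q_N)^t$, and the proposition follows. The only real obstacle is the bookkeeping around the truncation $(\cdot)_+$ in the exponents; the algebraic content is the identification of the \emph{balanced} decomposition $k=qt+r$ as the correct choice of indices $k_1,\dots,k_t\in\{q,q+1\}$ among the minimal generators, after which the verification is a routine case split that one only has to carry out once.
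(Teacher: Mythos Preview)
Your proof is correct and follows essentially the same route as the paper: both arguments invoke Lemma~\ref{fathfg} to recognize $\mathcal{I}(P_M,Q_N)^{(t)}$ as the ideal of the Hadamard fat grid with parameters $a_i'=ta_i$, $b_j'=tb_j$, and then use Theorem~\ref{generators} to see that each of its minimal generators already lies in $\mathcal{I}(P_M,Q_N)^t$. The paper simply asserts that the products of $t$ generators of $\mathcal{I}(P_M,Q_N)$ produce all forms $H_1^{ta_1-k}\cdots V_s^{tb_s+k}$ for $0\le k\le t(m_r+n_s-1)$, whereas you make this step explicit via the balanced factorization $F_k=g_{q+1}^{\,r}g_q^{\,t-r}$ with $k=qt+r$ and carry out the $(\cdot)_+$ case analysis; this is a welcome clarification of a point the paper leaves implicit.
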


\begin{proof}
By Theorem \ref{generators}, a minimal set of generators of the ideal $\mathcal{I}(P_M,Q_N) $ consists of $ m_r+n_s $ generators of types 
\begin{equation}\label{genI}
    g_k=H_1^{a_1 -k}\cdots H_r^{a_r-k}\cdot V_1^{b_1+k}\cdots V_s^{b_s+k}, 
\end{equation} 
where
\begin{equation*}
    a_i=m_{r-i+1}+n_s-1, \quad (i=1,\ldots, r), \qquad b_j=n_{s-j+1}-n_s, \quad (j=1,\ldots,s),
\end{equation*}
\begin{equation*}
    k=0,1, \dots, m_r+n_s-1.
\end{equation*}

The ordinary power $\mathcal{I}(P_M,Q_N)^t$ is generated by all possible products of $t$ generators of $\mathcal{I}(P_M,Q_N)$: 
\begin{equation}\label{tpower}
g_0^{t_0}g_1^{t_1}\cdots g_{m_r+n_s-1}^{t_{m_r+n_s-1}}
\end{equation}
where $\sum t_i=t$.
Substituting  (\ref{genI}) in (\ref{tpower}) we get that the generators of $I^t$ are of the form
\begin{equation*}\label{genI_tpower}
    H_1^{ta_1 -k}\cdots H_r^{ta_r-k}\cdot V_1^{tb_1+k}\cdots V_s^{tb_s+k}, 
\end{equation*} 
where $k=0, \dots t( m_r+n_s-1)$.

By Lemma \ref{fathfg}, we know that $\mathcal{I}(P_M,Q_N)^{(t)}$ is the ideal of a Hadamard fat grid given by the sets $P_{M'} \star Q_{N'} $ with multiplicities $M'=\{tm_1-(t-1),\ldots,tm_r-(t-1)\}$ and $ N'=\{tn_1,\ldots,tn_s\}$.
Hence, again by Theorem \ref{generators}, $\mathcal{I}(P_M,Q_N)^{(t)}$ is generated by
\begin{equation}
\label{eq:gens}
    \overline{g_k}=H_1^{\overline{a_1} -\overline{k}}\cdots H_r^{\overline{a_r}-\overline{k}}\cdot V_1^{\overline{b_1}+\overline{k}}\cdots V_s^{\overline{b_s}+\overline{k}}, 
\end{equation} 
where
\begin{equation*}
    \overline{a_i}=tm_{r-i+1}-t+1-tn_s-1=t(m_{r-i+1}+n_s-1)=ta_i, \qquad (i=1,\ldots, r),
\end{equation*}
\begin{equation*}
    \overline{b_j}=tn_{s-j+1}-tn_s=t(n_{s-j+1}-n_s)=tb_j, \qquad (j=1,\ldots,s),
\end{equation*}
and
\begin{equation*}
    \overline{k}=0,1, \dots, tm_r-t+1+tn_s-1.
\end{equation*}

Since $ tm_r-t+1+tn_s-1=t(m_r+n_s-1)$, every generator in $\mathcal{I}(P_M,Q_N)^{(t)}$ is also a generator in $\mathcal{I}(P_M,Q_N)^t$, giving $\mathcal{I}(P_M,Q_N)^{(t)}\subset \mathcal{I}(P_M,Q_N)^t$. The other inclusion  is obvious by definition of symbolic powers, hence we get $\mathcal{I}(P_M,Q_N)^t=\mathcal{I}(P_M,Q_N)^{(t)}$.
\end{proof}

Using the definition of resurgence we get the following

\begin{corollary} \label{brian}
Let $\mathcal{I}(P_M,Q_N)$ be the ideal of a Hadamard fat grid, then $\rho(\mathcal{I}(P_M,Q_N))=1$.
\end{corollary}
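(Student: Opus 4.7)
The plan is to deduce the corollary directly from Proposition \ref{equality symb and reg}, which already establishes the nontrivial content: that all symbolic powers of $\mathcal{I}(P_M,Q_N)$ coincide with the corresponding ordinary powers. Given this equality, computing $\rho(\mathcal{I}(P_M,Q_N))$ reduces to understanding when an ordinary power is contained in another ordinary power, which is a completely degree-theoretic question for a homogeneous proper ideal.

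Write $I := \mathcal{I}(P_M,Q_N)$ for brevity. First I would recall that $I$ is a nonzero proper homogeneous ideal, so $\alpha(I) > 0$, and that $\alpha$ is multiplicative on ordinary powers of a homogeneous ideal in the sense $\alpha(I^m) = m\,\alpha(I)$. From this it follows that if $m < r$ then $\alpha(I^m) = m\,\alpha(I) < r\,\alpha(I) = \alpha(I^r)$, and hence $I^m$ must contain some element of degree strictly less than $\alpha(I^r)$, which forces $I^m \not\subseteq I^r$. Conversely, when $m \geq r$ one has $I^m \subseteq I^r$ trivially.

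Next I would combine this with Proposition \ref{equality symb and reg}: since $I^{(m)} = I^m$ for every $m \geq 1$, the containment $I^{(m)} \subseteq I^r$ holds if and only if $I^m \subseteq I^r$, which by the previous step holds if and only if $m \geq r$. Therefore
\[
\{m/r : I^{(m)} \not\subseteq I^r\} \;=\; \{m/r : m < r\},
\]
and this set has supremum $1$ (values arbitrarily close to but never equal to $1$ are attained, e.g. $(r-1)/r$). By the definition of resurgence $\rho(I) = \sup\{m/r : I^{(m)} \not\subseteq I^r\}$, this gives $\rho(I) = 1$.

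There is essentially no obstacle here since all the heavy lifting is done by Proposition \ref{equality symb and reg}; the only thing to be careful about is making explicit that $I^m \not\subseteq I^r$ when $m < r$, so that the supremum is genuinely $1$ and not something smaller. This is the short degree argument above. One may also remark, as the authors do in the paragraph preceding Proposition \ref{SS}, that $\rho(I) = 1$ is consistent with the general fact that complete intersections have resurgence $1$, even though our $I$ need not itself be a complete intersection.
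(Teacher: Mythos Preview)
Your proof is correct and follows the same approach as the paper: both deduce the corollary directly from Proposition \ref{equality symb and reg} and the definition of resurgence. The paper's own proof is just the one-line remark ``Using the definition of resurgence we get the following,'' whereas you have made explicit the step that $I^m \not\subseteq I^r$ for $m<r$ (via $\alpha(I^m)=m\,\alpha(I)$) so that the supremum is exactly $1$; this is a welcome clarification, not a different method.
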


\begin{remark} Corollary \ref{brian} can also be recovered by the recent Theorem 2.3 in \cite{HKZ} where the authors provide various examples and questions about computing the resurgence of homogeneous ideals.
\end{remark}

\bibliographystyle{abbrv}

\end{document}